\documentclass{amsart}
\usepackage[usenames,dvipsnames]{xcolor}
\definecolor{darkgreen}{rgb}{0,0.5,0}
\usepackage{url}
\usepackage{amsthm}%
\usepackage[
colorlinks, citecolor=darkgreen,
backref,
pdfauthor={Ander Martin},
pdftitle={}
]{hyperref} 
\hypersetup{
	colorlinks=true,
	linkcolor=blue,
	citecolor=blue,
	urlcolor=blue,
	linktoc=page
}
\usepackage{tikz,enumerate,caption,stmaryrd,amsfonts,amssymb,systeme,comment}
\usetikzlibrary{matrix,positioning,arrows,shapes,chains,calc,automata}
\usetikzlibrary{decorations.pathreplacing}
\usepackage[T1]{fontenc}

\newlength{\proofmargin}
\renewcommand*{\backref}[1]{}
\renewcommand*{\backrefalt}[4]{%
	\ifcase #1 %
	\relax
	\or
	$\uparrow$#2.%
	\else
	$\uparrow$#2.%
	\fi%
}

\DeclareFontFamily{U}{wncy}{}
\DeclareFontShape{U}{wncy}{m}{n}{<->wncyr10}{}
\DeclareSymbolFont{mcy}{U}{wncy}{m}{n}
\DeclareMathSymbol{\Sha}{\mathord}{mcy}{"58}

\DeclareMathOperator{\Q}{\mathbf{Q}}

\DeclareMathOperator{\Z}{\mathbf{Z}}
\DeclareMathOperator{\C}{\mathbf{C}}

\DeclareMathOperator{\et}{\mbox{\scriptsize \'et}}

\DeclareMathOperator{\dR}{\mbox{\scriptsize dR}}

\DeclareMathOperator{\cris}{\mbox{\scriptsize cris}}

\DeclareMathOperator{\Hom}{\mathrm{Hom}}

\DeclareMathOperator{\Gal}{\mathrm{Gal}}
\DeclareMathOperator{\Res}{\mathrm{Res}}

\DeclareMathOperator{\Tr}{\mathrm{Tr}}

\DeclareMathOperator{\loc}{\mathrm{loc}}

\newcommand{\Ker}{\mathrm{Ker}}

\DeclareFontFamily{U}{wncy}{}
\DeclareFontShape{U}{wncy}{m}{n}{<->wncyr10}{}
\DeclareSymbolFont{mcy}{U}{wncy}{m}{n}
\DeclareMathSymbol{\Sh}{\mathord}{mcy}{"58} 

\newtheorem{theorem}{Theorem}[section]
\newtheorem{prop}[theorem]{Proposition}

\newtheorem{lemma}[theorem]{Lemma}

\theoremstyle{plain}
\newtheorem*{unnumberedtheorem}{Theorem}
\usepackage{tikz-cd}
\theoremstyle{plain}
\newtheorem{remark}{Warning}

\begin{document}
	\title{Lie weights for CM curves and unipotent Selmer dimensions}
	\author{Ander Martin}
	\thanks{%
		\parbox[t]{\dimexpr\linewidth-\parindent\relax}{%
			\raggedright
			\linespread{1}\fontsize{9}{10}\selectfont
			School of Mathematics, University of Edinburgh, Edinburgh, UK\newline
			\texttt{ander.martin-iribar@ed.ac.uk}%
			\newline
			\linespread{1}\fontsize{9}{10}\selectfont
			ORCID: 0009-0006-9176-0073
		}\nopunct
	}
	\begin{abstract}
		We prove an equidistribution result for Galois characters in the graded Lie components $L_n$ associated to the $\Q_p$-fundamental group of a projective hyperbolic curve $X$ over a number field $k$ with geometrically CM Jacobian. That is, we consider the $p$-adic completion of a fixed character lattice, where we define measures $\mu_n$ counting normalized character multiplicities in $L_n$, and show that $(\mu_n)_n$ weakly converges to the Haar measure. The key idea to prove convergence is to write the Fourier coefficients of $\mu_n$ in terms of traces of operators on $L_n$ induced by symplectic operators on the Tate module, and control them with a known symplectic character formula, followed by Weyl's criterion for compact abelian groups. Our main arithmetic application is to combine equidistribution with the Iwasawa-theoretic method of Coates--Kim and a compactness argument for moving analytic loci, giving a dimension inequality for unipotent Selmer schemes: $\dim_{\Q_p}H^1_f(G_{k,T},U_n)
		<
		\dim_{\Q_p}\prod_{v\mid p}H^1_f(G_v,U_n)$ for all sufficiently large $n,$ with the quotient having limit superior at most $1/2.$ This answers a particular case of a conjecture by Kim, from which the finiteness of $X(\Q)$ can be recovered when $k=\Q.$
	\end{abstract}
	\maketitle
	\tableofcontents
	\newpage
	\section{Introduction}
	Let $X/k$ be a smooth, projective, geometrically integral curve of genus
	$g\geq2$ over a number field, with a point $b\in X(k)$ and geometrically
	CM Jacobian $J$. Fix an odd prime $p$ such that $X$ has good reduction
	at every place above $p$. Consider the étale $\Q_p$-pro-unipotent
	fundamental group $U:=\pi_1^{\et,\Q_p}(X_{\bar k},b)$ of $X$ with its lower central series $U^1=U$, $U^{n+1}=[U,U^n].$ Let
	$$
	U_n=U/U^{n+1},\qquad L_n=U^n/U^{n+1},\qquad
	\mathrm{gr}(U)=\bigoplus_{n\geq1}L_n.
	$$
	Here $\mathrm{gr}(U)$ is the associated graded Lie algebra, with brackets given by commutators. Write $d_n=\dim_{\Q_p}L_n$ and
	$D_n=\sum_{j=1}^n d_j=\dim_{\Q_p}U_n$. The CM hypothesis allows us to make the action on the Tate module $L_1\simeq V_pJ$ explicit.
	After choosing finite extensions $F/k$ and $L/\Q_p$ as in Section \ref{class},
	we can write $V_L:=V_pJ\otimes_{\Q_p}L\simeq\bigoplus_{i=1}^{2g}L(\chi_i)$
	as $G_F$-representations, where each character $\chi_i$ takes values in
	$1+p\mathcal O_L$. Since $\mathrm{gr}(U)$ is generated by $L_1$,
	there is a $G_F$-equivariant quotient map
	$$
	V_L^{\otimes n}\twoheadrightarrow L_n\otimes_{\Q_p}L,
	\qquad
	v_1\otimes\cdots\otimes v_n
	\longmapsto[v_1,[v_2,\ldots,[v_{n-1},v_n]\ldots]].
	$$
	A bracket of
	eigenvectors with characters $\chi_{i_1},\ldots,\chi_{i_n}$ has their product
	$\prod_{j=1}^n\chi_{i_j}$ as its character. Writing
	$L_n\otimes_{\Q_p}L\simeq\bigoplus_\theta L(\theta)^{m_n(\theta)}$,
	the multiplicity $m_n(\theta)$ is the dimension of the subspace spanned
	by brackets in $L_n \otimes_{\Q_p} L$ that have $\theta$ as their character. We will show that these multiplicities/dimensions become uniformly distributed asymptotically.
	
	Purity of the Tate module allows us to assign degree $1$ to each $\chi_i,$ so every character in $L_n \otimes_{\Q_p} L$ has degree $n.$ However, as we prove in Warning \ref{warning}, one must remove a common factor in order to compare different degrees. We do that by setting $u_i=\chi_1^{-1}\chi_i$ and
	$B=\langle u_1,\ldots,u_{2g}\rangle$. Every character in $L_n$ can then be written as $\chi_1^n u$ with $u \in B.$ Since $B$ kills the degree, we are ready to compare characters by varying the degree now! On the completion $B_p=\varprojlim_k B/B^{p^k}\simeq\Z_p^a$, we define the probability measures
	\begin{equation}\label{mu}
		\mu_n=\frac{1}{d_n}\sum_{u\in B}m_n(\chi_1^n u)\delta_u,
	\end{equation}
	where $\delta_u$ is the Dirac measure at $u$. They keep track of each character $u$'s multiplicity relative to the total one in degree $n$. Let $\mu$ be the Haar probability measure on $B_p$. We prove the following equidistribution result.
	\begin{unnumberedtheorem}[Theorem~\ref{conv}]
		The measures $\mu_n$ weakly converge to $\mu$.
		In particular, for every $k\geq1$ and $r\in B_p$,
		we have, for the projection
		$\pi_k:B_p\rightarrow B/B^{p^k}$,
		$$\mu_n\left(\pi_k^{-1}\left(\pi_k(r)\right)\right)
		=
		\frac{1}{d_n}
		\sum_{\substack{u\in B\\\pi_k(u)=\pi_k(r)}}
		m_n(\chi_1^n u)
		\xrightarrow{n\to\infty}p^{-ak}.$$
	\end{unnumberedtheorem}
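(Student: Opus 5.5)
The plan is to use Weyl's criterion on the compact abelian group $B_p\simeq\Z_p^a$. Its Pontryagin dual is the discrete group of continuous characters $\psi:B_p\to\mu_{p^\infty}\subset\C^\times$. Weak convergence $\mu_n\to\mu$ is therefore equivalent to
$$\hat\mu_n(\psi)=\frac1{d_n}\sum_{u\in B}m_n(\chi_1^nu)\psi(u)\longrightarrow 0$$
for every nontrivial $\psi$. The statement about the cylinder sets $\pi_k^{-1}(\pi_k(r))$ then follows directly. Their indicator functions are locally constant, hence continuous, and finite linear combinations of characters factoring through $B/B^{p^k}$; their $\mu$-measure is $p^{-ak}$.

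To compute $\hat\mu_n(\psi)$ I will realize it as a trace. Such a $\psi$ factors through some finite quotient $B/B^{p^k}$. Choose a finite-order diagonal operator $\sigma$ on $V_L\otimes\C$, fixing an embedding of $L$ into $\C$ after adjoining roots of unity, that acts on the $\chi_i$-eigenline by the scalar $\psi(u_i)$. Here $u_1=1$, so $\sigma$ acts by $1$ on the $\chi_1$-line. Brackets of eigenvectors with characters $\chi_{i_1},\dots,\chi_{i_n}$ have character $\chi_1^n\prod_j u_{i_j}$, so $\sigma$ acts on them by $\psi(\prod u_{i_j})$. The functorial action of $\sigma$ on $\mathrm{gr}(U)$ therefore gives $\Tr(\sigma\mid L_n)=\sum_u m_n(\chi_1^nu)\psi(u)$.

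For this to be well defined, $\sigma$ must preserve the defining relation of $\mathrm{gr}(U)$, namely the symplectic form $\sum[a_i,b_i]$. Equivalently, $\sigma$ must be (up to scalar) a symplectic similitude for the Weil pairing. This holds because the Weil pairing is Galois equivariant with values in $\Q_p(1)$. It forces the characters to pair off as $\chi_i\chi_{i'}=\chi_{\mathrm{cyc}}$, so $u_iu_{i'}=\chi_1^{-2}\chi_{\mathrm{cyc}}$ is independent of the pair. Hence the chosen $\sigma$ lies in $\mathrm{GSp}_{2g}$ with multiplier $c=\psi(\chi_1^{-2}\chi_{\mathrm{cyc}})$. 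I can then apply the known character formula for $\mathrm{gr}$ of a surface group, the Labute/Witt-type formula
$$\Tr(\sigma\mid L_n)=\frac1n\sum_{e\mid n}\mu(n/e)\,\sum_{\text{roots}}\ldots,$$
which expresses the trace through the characteristic polynomial $1-\Tr(\sigma\mid V)t+c\,t^2$ evaluated at the powers $\sigma^{e}$. Concretely, $\Tr(\sigma\mid L_n)=\frac1n\sum_{e\mid n}\mu(n/e)\sum_j\alpha_j(\sigma^{n/e})^{e}$, where the $\alpha_j$ are the inverse roots of $1-\Tr(\sigma^{m}|V)t+c^{m}t^2$. In parallel, $d_n=\frac1n\sum_{e\mid n}\mu(n/e)(\beta_+^e+\beta_-^e)$ with $\beta_\pm$ the roots of $t^2-2gt+1$, so $d_n\sim\beta_+^n/n$ with $\beta_+=g+\sqrt{g^2-1}$.

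The main obstacle is bounding the trace uniformly. The inverse roots for $\sigma^m$ satisfy $|\alpha|\le$ the largest root of $t^2-|\Tr\sigma^m|t+1$ (since $|c|=1$). Suppose $\psi$ is nontrivial on $B$. Then some $\psi(u_i)\neq1$, so $|\Tr(\sigma\mid V)|<2g$ strictly and the dominant term $e=n$ gives $|\alpha|^n\le\gamma^n$ with $\gamma<\beta_+$. The terms with $e\le n/2$ contribute $O(n\beta_+^{n/2})$, trivially bounded using $|\Tr\sigma^m|\le 2g$. Hence $|\hat\mu_n(\psi)|\ll(\gamma^n+n\beta_+^{n/2})/\beta_+^n\to0$. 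I must handle the case where $\psi(u_i)$ is nontrivial only in a way that could still give $|\Tr\sigma|=2g$; this cannot happen, since equality forces all $\psi(u_i)$ equal to $\psi(u_1)=1$. Some care is also needed in choosing $\sigma$ consistently and in making the trace formula valid for a complex semisimple similitude. I would verify this by density: apply the formula to Galois or torus elements in $\mathrm{GSp}$ and extend by Zariski density of the torus.
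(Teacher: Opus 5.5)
Your proposal is correct and is essentially the paper's proof: Weyl's criterion on $B_p$, the Fourier coefficient realized as the trace of a finite-order diagonal operator on $L_{n,\C}$, and the Witt/Labute-type symplectic character formula, with the strict inequality $|\Tr(\sigma\mid V)|<2g$ controlling the dominant term. The one difference is that the paper rescales your similitude to $h_\eta=z^{-1}\sigma\in\mathrm{Sp}_{2g}(\C)$ with $z^2=c$, so that $\int\eta\,d\mu_n=z^n\Tr(h_\eta\mid L_{n,\C})/d_n$ and the $\mathrm{Sp}$ formula applies directly, with no density argument needed; this rescaling is also what justifies your root bound. That bound does not follow from $|c|=1$ alone (for example, $t^2-2it+1$ has a root of modulus $1+\sqrt2$). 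It holds because the eigenvalues pair as $\lambda,\ c\lambda^{-1}$ on the unit circle, which gives $\Tr(\sigma^m\mid V)=z^m s_m$ with $s_m\in[-2g,2g]$ real.
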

	In other words, at every fixed $p$-adic precision, the residue classes in $B_p$
	carry asymptotically equal proportions of the dimension. 
	
	To prove this, we integrate continuous characters $\eta:B_p \rightarrow S^1$ against $\mu_n,$ obtaining the Fourier coefficients $\int_{B_p}\eta\,d\mu_n.$ We express them as certain normalized
	traces as follows. After choosing an embedding
	$L\hookrightarrow\C$, $\eta$ induces a symplectic operator on $V_L \otimes_L \C,$ which at the same time gives rise to another operator on $L_n \otimes_{\Q_p} \C;$ see (\ref{comp}). The symplectic character formula
	\cite[Proposition~4.4]{Fil13} bounds these traces by $o(d_n)$
	(Lemma \ref{trace}), and Weyl's criterion for compact abelian groups gives the required convergence.
	
	The arithmetic application follows the Iwasawa-theoretic strategy of
	Coates--Kim \cite{CK10}. We fix an Iwasawa class module $Y_R$ in Section \ref{est}, which is
	finitely generated and torsion over a localized power series ring $R$, and we can choose
	a non-zero annihilator $\delta$. The relation $\delta Y_R=0$ forces
	every character seen by $R$-linear maps
	$Y_R\to L_n^\vee(1)\otimes_{\Q_p}L$ to satisfy a $p$-adic analytic
	equation, and we can bound their normalized total multiplicity by
	$$
	\mu_n\{u\in B_p:f(u,n)=0\},
	$$
	where $f$ is a $p$-adic analytic function obtained by twisting $\delta,$ and is non-zero as we show in
	Lemma \ref{nonzero}. Such functions have loci of Haar
	measure zero, but here the zero set also depends on $n$, so we are not only dealing with convergence of measures, but also with moving loci! A compactness
	argument (Lemma \ref{slice}) combined with equidistribution fixes this issue, and the aforementioned multiplicities tend to zero. Consequently, we get the main Iwasawa estimate
	$\dim_L\Hom_R(Y_R,L_n^\vee(1)\otimes_{\Q_p}L)=o(d_n)$
	(Theorem \ref{estimate}).
	
	To interpret this arithmetically, let $T$ be a finite set containing the
	archimedean places of $k$, the ones above $p$ and those of bad
	reduction. Let $G_{k,T}=\Gal(k_T/k)$, where $k_T$ is the maximal
	extension of $k$ unramified outside of $T$, and
	$G_v=\Gal(\overline{k_v}/k_v)$.
	The Bloch--Kato Selmer scheme\footnote{We refer the reader to \cite{Bet23} and \cite{Kim05} for a discussion on the representability of these cohomology functors.} $H^1_f(G_{k,T},U_n)$ imposes crystallinity at every $v\mid p.$ Let $U_{n,v}^{\dR}$ be the depth-$n$ quotient of the
	de Rham fundamental group\footnote{Following \cite{Kim09}, we use Wojtkowiak's filtration \cite[Section~5.6]{Woj93} on the de Rham fundamental group. Betts notes \cite[Remark~4.2.2]{Bet23} that its agreement with Hadian's filtration is expected, but not published.} of $X_{k_v}.$
	The Bloch--Kato logarithm identifies $H^1_f(G_v,U_n) \simeq \Res_{k_v/\Q_p}U_{n,v}^{\dR}/F^0$ as affine $\Q_p$-schemes
	\cite[Proposition~3.1.10]{Bet23}. We prove the following result.
	\begin{unnumberedtheorem}[Theorem~\ref{thm}]
		(Projective CM case)
		As $n\to\infty$,
		\begin{align*}
			\dim_{\Q_p}H^1_f(G_{k,T},U_n)
			&\leq\frac{[k:\Q]}{2}D_n+o(D_n),\\
			\dim_{\Q_p}\prod_{v\mid p}
			\Res_{k_v/\Q_p} U_{n,v}^{\dR}/F^0
			&=[k:\Q]D_n-o(D_n).
		\end{align*}
		In particular, for all $n\gg0$,
		$$\dim_{\Q_p}H^1_f(G_{k,T},U_n)
		<
		\dim_{\Q_p}\prod_{v\mid p}
		\Res_{k_v/\Q_p} U_{n,v}^{\dR}/F^0,
		\qquad
		\limsup_{n\to\infty}
		\frac{\dim_{\Q_p}H^1_f(G_{k,T},U_n)}
		{\dim_{\Q_p}\prod_{v\mid p}
			\Res_{k_v/\Q_p} U_{n,v}^{\dR}/F^0}
		\leq\frac12.$$
	\end{unnumberedtheorem}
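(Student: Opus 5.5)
We argue by induction on $n$, using the central extensions $1\to L_n\to U_n\to U_{n-1}\to 1$. The Selmer side is bounded through the Iwasawa estimate (Theorem~\ref{estimate}); the local side is computed from the Hodge filtration.

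\emph{Global bound.} Twisting the central extension by Galois cohomology gives, fibrewise,
$$\dim H^1_f(G_{k,T},U_n)\leq \dim H^1_f(G_{k,T},U_{n-1})+\dim H^1_f(G_{k,T},L_n),$$
so it suffices to show $\dim_{\Q_p} H^1(G_{k,T},L_n)\leq \frac{[k:\Q]}{2}d_n+o(d_n)$ and then sum over $j\le n$. Since $L_n$ has weight $-n<0$, we have $H^0(G_{k,T},L_n)=0$. The global Euler characteristic formula then gives
$$\dim H^1-\dim H^2=\sum_{v\mid\infty}\dim L_n/L_n^{G_v}=\frac{[k:\Q]}{2}d_n,$$
where the last equality uses that complex conjugation acts on $L_n$ with trace $0$, or at least $o(d_n)$; this follows because $L_n$ is a quotient of $V^{\otimes n}$ and $c$ swaps the Hodge types. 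Hence the task reduces to $\dim H^2(G_{k,T},L_n)=o(d_n)$.

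\emph{Bounding $H^2$.} Poitou--Tate duality bounds $H^2$ by local $H^2$ terms plus $\Sha^2(L_n)\simeq\Sha^1(L_n^\vee(1))^\vee$. The local terms are $H^0(G_v,L_n^\vee(1))$, which vanish for large $n$ by weights. For $\Sha^1$, we pass to $F$ (inflation-restriction costs nothing over $\Q_p$) and to the abelian $p$-adic Lie extension $F_\infty$ cut out by the $\chi_i$. On $G_{F_\infty}$ the module $L_n^\vee(1)$ is trivial, so classes unramified everywhere become homomorphisms from the Iwasawa module $Y_R$ of Section~\ref{est}. This yields
$$\dim\Sha^1(L_n^\vee(1))\leq\dim_L\Hom_R(Y_R,L_n^\vee(1)\otimes L)+(\text{error from }H^1(\Gamma,\cdot)\text{ terms}).$$
The error term is bounded by the invariants of $\Gamma$ on twists, which again vanish by weights. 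Theorem~\ref{estimate} then gives $o(d_n)$.

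\emph{Local side.} By \cite[Proposition~3.1.10]{Bet23}, the local dimension is
$$\sum_{v\mid p}[k_v:\Q_p]\dim U^{\dR}_{n,v}/F^0=\sum_v[k_v:\Q_p](D_n-\dim F^0U_n^{\dR}).$$
Now $F^0\mathrm{Lie}$ is generated, in the graded sense, by $F^0 H_1^{\dR}$ of dimension $g$. Filtration degree $0$ on $L_j$ comes only from brackets of $F^0$ classes, which form the free-Lie quotient on $g$ generators modulo relations, of dimension $O(g^j/j)=o(d_j)$, since $d_j\sim (2g-1)^j/j$. Summing gives $[k:\Q]D_n-o(D_n)$.

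\emph{Conclusion and main obstacle.} Since $D_n\to\infty$ geometrically, the strict inequality for $n\gg0$ and the $\limsup\le 1/2$ follow from the two estimates. The step I expect to be the main obstacle is the transfer from $\Sha^1(L_n^\vee(1))$ to $\Hom_R(Y_R,\cdot)$ with a uniform $o(d_n)$ error. This requires controlling the kernel and cokernel of restriction to $F_\infty$ uniformly in $n$; I would handle it with the Hochschild--Serre spectral sequence and the weight vanishing of $\Gamma$-invariants.
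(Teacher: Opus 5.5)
\textbf{The gap is the archimedean term.} Your overall architecture matches the paper's, and the step you flag as the main obstacle is actually the easy one. For $n\geq 3$, take the image $\gamma_0\in\Gamma$ of a Frobenius at some $w\notin S$. Then $\gamma_0-1$ is invertible on $L_n^\vee(1)\otimes_{\Q_p}L$, and since $\Gamma$ is abelian this gives $H^1(\Gamma,L_n^\vee(1)\otimes_{\Q_p}L)=0$. So inflation--restriction is an honest injection with no error term (Proposition~\ref{asy}). To land in $Y$ you need local triviality, not just unramifiedness, at places above $T'\setminus T$; the paper gets this from $H^1_{\mathrm{ur}}=0$ by weights.

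The step that fails is the contribution of the real places in the Euler characteristic. The correct formula is
$h^1=h^2+r_2d_n+\sum_{v\ \mathrm{real}}\dim L_n^{c_v=-1}=h^2+\frac{[k:\Q]}{2}d_n-\frac12\sum_{v\ \mathrm{real}}\Tr(c_v\mid L_n)$.
Your expression $\sum_{v\mid\infty}\dim L_n/L_n^{G_v}$ drops the $r_2d_n$ term. So you need $\Tr(c_v\mid L_n)=o(d_n)$, and your justification does not give it.

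\begin{itemize}
\item The trace on a quotient of $V^{\otimes n}$ is not controlled by the trace on $V^{\otimes n}$, and it is not zero. For $n=2$ one has $L_2=\wedge^2V/\Q_p(1)$ and $\Tr(c_v\mid L_2)=-g+1$.
\item Swapping Hodge types only cancels the blocks $L_n^{p,q}\oplus L_n^{q,p}$ with $p\neq q$. For even $n$ this leaves only $|\Tr(c_v\mid L_n)|\leq h^{-n/2,-n/2}(L_n)$.
\item Showing this middle Hodge number is $o(d_n)$ is real work. Bounding it by the bidegree $(n/2,n/2)$ part of the free Lie algebra gives roughly $(2g)^n/n^{3/2}$. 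That is far larger than $d_n\sim\lambda^n/n$, since $\lambda=g+\sqrt{g^2-1}<2g$, so the relation $\omega$ has to be used.
\end{itemize}

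The paper supplies exactly this through the symplectic character formula. Since $C_v$ is anti-symplectic with $C_v^2=I$, the matrix $h=-iC_v$ lies in $\mathrm{Sp}_{2g}(\C)$, satisfies $h^4=I$, and is not $\pm I$. Lemma~\ref{trace} then gives $\Tr(C_v\mid L_n)=i^n\Tr(h\mid L_{n,\C})=o(d_n)$. Without this or an equivalent estimate, your global bound is unproved whenever $k$ has a real place, in particular for $k=\Q$.

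Two smaller points. First, $d_j\sim\lambda^j/j$, not $(2g-1)^j/j$; this does not affect your local bound, since $g<\lambda$. Second, your fibrewise inequality stated with $H^1_f(L_n)$ would need its own justification. You only use the weaker version with $H^1(L_n)$, and that follows from the closed immersions $H^1_f(G_{k,T},U_n)\hookrightarrow H^1(G_{k,T},U_n)\hookrightarrow\prod_jH^1(G_{k,T},L_j)$ used in the paper.
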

	For $k=\Q$, this proves the projective CM case of
	\cite[Conjecture~1]{Kim09}, from which the finiteness of $X(\Q)$ follows, as shown in \cite[p.~121]{Kim09}. In \cite{CK10}, the inequality is proved over $\Q$ for the meta-abelian quotient $W_n$ of $U_n,$ under a splitting hypothesis on $p.$ The main additional difficulty for $U_n$ is that the dimensions of $L_n$ grow exponentially, and their argument using Hall bases and symmetric powers is no longer sufficient. This is precisely where equidistribution comes into play.
	
	The connection with cohomology comes from restricting locally trivial cocycles to obtain the $R$-linear maps from $Y_R$ (Proposition \ref{asy}).
	Poitou--Tate duality then gives $h^2(G_{k,T},L_n)=o(d_n)$. The global Euler characteristic formula, together with the same trace estimate as before (applied to complex conjugation at real places), gives the global estimate. Hodge filtration estimates give the local one.

	We show equidistribution in Section \ref{equid}, deduce the Iwasawa estimate in Section \ref{est}, and prove the cohomological estimates and the dimension inequality in Section \ref{cohom}.

	\subsection*{Acknowledgements} I would like to thank Minhyong Kim for many helpful discussions and suggestions, as well as for encouraging me to continue working on this paper. I am also grateful to Xiang Li for reading a draft and providing feedback. This work was supported by the UKRI Centre for Doctoral Training in Algebra, 
	Geometry and Quantum Fields (AGQ), Grant Number EP/Y035232/1.
	\section{CM Lie weights}\label{equid}
	\subsection{Setup}\label{class}
	We define the field extensions and characters mentioned in the introduction\footnote{This subsection is very similar to \cite[Section~1]{CK10}. We spell out the constructions because we will say more about the $\iota_i$ in Lemma \ref{rank}}.
	Since $J$ is CM, there exists a commutative semisimple algebra
	$$K=\prod_{i=1}^r K_i \subset \mathrm{End}^0\left(J_{\overline{k}}\right), \qquad \dim_{\Q} K = 2g,$$
	where each $K_i$ is a CM field. Let
	$$K_p = K \otimes_{\Q} \Q_p \simeq \prod_{i=1}^r \prod_{\mathfrak{p} \mid p} K_{i, \mathfrak{p}}, \qquad \mathcal{O}_K = \prod_{i=1}^r \prod_{\mathfrak{p}\mid p} \mathcal{O}_{K_i, \mathfrak{p}},$$
	and notice that $V_pJ$ is a free $K_p$-module of rank $1.$ Let $F_0/k$ be a finite Galois extension such that the endomorphisms in $K$ are defined over $F_0$ (hence the action of $G_{F_0}$ on $V_pJ$ commutes with that of $K$) and $\mu_p \subset F_0.$ We then have
	$$\rho_0: G_{F_0} \rightarrow \operatorname{Aut}_{K_p} V_pJ \simeq K_p^\times,$$
	and $\rho_0(G_{F_0})$ is a compact abelian group.
	Since $p$ is odd, the $p$-adic logarithm gives an isomorphism of topological groups
	$$\log: 1+ p \mathcal{O}_K \xrightarrow{\sim} p \mathcal{O}_K.$$ Replace $F_0$ and $\rho_0$ by
	$$F:=\left(\overline{k}\right)^{\rho_0^{-1}(1+p \mathcal{O}_K)}, \qquad \rho:=\rho_0|_{G_F}.$$
	Then $\rho(G_F) \subset 1+p\mathcal{O}_K,$ and $\log (\rho (G_F))$ is a compact subgroup of the finitely generated $\Z_p$-module $p \mathcal{O}_K.$ Thus $\log (\rho(G_F))$ is a free $\Z_p$-module of finite rank $c$ for some $c \in \Z_{\geq 0}.$ The logarithm therefore identifies $\rho(G_F)$ with $\Z_p^c$. Define
	$$F_\infty:=F \left(J\left[p^\infty\right]\right) \ \text{and} \ \Gamma:=\Gal(F_\infty/F).$$ An element of $G_F$ fixes $F_\infty$ exactly when it acts trivially on $T_pJ$, or equivalently on $V_pJ$. Thus $$F_\infty = \overline{F}^{\ker \rho} \ \ \ \text{and} \ \ \  \Gamma \simeq G_F / \ker \rho \simeq \rho(G_F) \simeq \Z_p^c.$$ The Weil pairing $J[p^n] \times J[p^n] \rightarrow \mu_{p^n}$ is surjective and Galois-equivariant, so $F(\mu_{p^\infty}) \subset F_\infty,$ so $\Gamma$ is infinite and $c\geq1$. Let $L/\Q_p$ be a finite extension splitting $K_p,$ so that $K_p \otimes_{\Q_p} L \simeq L^{2g}.$ Denote the corresponding $\Q_p$-algebra maps $ \iota_1, \dots, \iota_{2g}: K_p \rightarrow L.$ These maps give the $2g$ continuous characters
	$$\chi_i = \iota_i \circ \rho: \Gamma \xrightarrow{\rho}1+p\mathcal{O}_K \subset (K_p)^\times \xrightarrow{\iota_i} 1+p \mathcal{O}_L \subset L^\times.$$
	The identification $V_pJ\simeq K_p$ as $K_p$-modules gives the following isomorphisms as $G_F$-representations.
	$$ V_L \simeq \bigoplus_{i=1}^{2g} L(\chi_i).$$
	Here $L(\chi_i)$ means the $1$-dimensional $L$-representation with the $\Gamma$-action given by the character $\chi_i$ (up to $g$ characters can coincide).
	
	Let $T'$ be the union of $T$ and the places of $k$ that ramify in $F/k,$ and let $S$ be the set of places of $F$ above the places of $T'.$ Since $J$ has good reduction outside $T,$ and $\mu_{p^\infty}$ is unramified away from the places above $p,$ it follows that $F_\infty/F$ is unramified outside $S.$
	
	\subsection{Measures and equidistribution}
	We explain our choice of multiplicity-counting measures and show their weak convergence to the Haar measure. 
	
	The Weil pairing
	$V_L\times V_L\to L(1)$ pairs $V_L(\chi)$ perfectly with
	$V_L(\chi_{\mathrm{cyc}}\chi^{-1})$ and trivially with all other
	eigenspaces. Fix a generator of $L(1)$, dual bases in the paired
	eigenspaces, and a symplectic basis in the eigenspaces where
	$\chi^2=\chi_{\mathrm{cyc}}$. This gives a symplectic eigenbasis
	$$
	e_{x_1},\ldots,e_{x_g},e_{y_1},\ldots,e_{y_g},
	\qquad \chi_{x_i}\chi_{y_i}=\chi_{\mathrm{cyc}},
	$$
	also denoted by $e_1,\ldots,e_{2g}$. In particular,
	$\chi_{\mathrm{cyc}}$ takes values in $1+p\mathcal O_L$. Write $u_i=\chi_1^{-1} \chi_i$ and consider the twisted character lattice $B=\langle u_1, \dots, u_{2g} \rangle$ together with its completion $B_p = \varprojlim_k B/B^{p^k}.$  Since $B$ is finitely generated and $1+p\mathcal O_L$ is
	torsion-free, $B \simeq \Z^a$ for some $a >0$ ($a \neq 0$ because only $g$ characters can coincide at most by the Hodge-Tate decomposition of $V_pJ$). Hence $B_p \simeq \Z_p^a$ is a compact abelian group in which $B$ embeds densely.
	
	As promised, we prove why twisting the character lattice is necessary, beyond the intuition that "the only meaningful way to compare multiplicities in different degrees is by normalizing all characters to have degree zero".
	\begin{remark}\label{warning}
		One might wonder why we can't take $A=\langle \chi_1, \dots, \chi_{2g} \rangle$ as our lattice instead and define analogous measures there. However, assigning $\deg(\chi_i)=1$ gives rise to a well-defined map $\deg:A\to\Z$. Indeed, assume $\theta=\prod_i \chi_i^{r_i}=\prod_i \chi_i^{s_i}.$ Let $\gamma_w \in \Gamma$ be the image of $\operatorname{Frob}_w \in G_{F_w}$ for some $w \notin S.$ Since $V_L$ is pure of weight $-1,$ $|\chi_i(\gamma_w)|_{\C}=N(w)^{-1/2}$ for all $i.$ Therefore, $\sum_i r_i = \sum_i s_i,$ and $\deg(\theta)=\sum_i r_i$ is well-defined. Define the measures $$\nu_n=\frac{1}{d_n} \sum_{\theta \in A} m_n(\theta)\delta_\theta$$ on the $p$-adic completion $A_p.$ Characters in $L_n \otimes_{\Q_p} L$ are products of $n$ degree-one characters, so $m_n(\theta) \neq 0$ implies $\deg(\theta)=n.$ Consider the residual degree map
		$$\overline{\deg}: A_p \xrightarrow{\pi_1} A/A^p \xrightarrow{\deg} \Z/p\Z.$$
		Writing $\mu_A$ for the Haar probability measure on $A_p$, for $D=\ker \left(\overline{\deg}\right)$ we have\footnote{When $p\mid n$, a residue class $rA_p^p$ can have non-zero
			mass only if $p \mid \deg r$. This makes equidistribution of multiplicities among residue classes impossible. In other words, classes
			of non-zero degree characters have zero $\nu_n$-measure for infinitely many
			$n$ even though they have positive Haar measure.}
		$$\nu_n(D) = \begin{cases}
			1, \qquad p \mid n \\
			0, \qquad p \nmid n
		\end{cases}, \qquad \mu_A(D)=\frac{1}{p}.$$
		This shows that $(\nu_n)_n$ can't weakly converge to $\mu_A.$ Now, notice that $B = \ker (\deg).$ Indeed, $B \subset \ker (\deg)$ is clear, and given $\prod_i \chi_i^{r_i}$ with $\sum_i r_i = 0,$ we can write
		$$\prod_i \chi_i^{r_i} = \chi_1^{\sum_i r_i} \prod_i \left(\chi_1^{-1}\chi_i\right)^{r_i} =\prod_i\left(\chi_1^{-1}\chi_i\right)^{r_i} \in B.$$
		This overcomes the obstruction, because $\deg(u_i)=0$ now! Hence for
		$$\overline{\deg}: B_p \rightarrow \Z/p\Z$$ one has $\ker (\overline{\deg}) = B_p$ and $\mu_n(\ker(\overline{\deg}))=1=\mu(\ker (\overline{\deg})).$ 
	\end{remark}

	As explained in the introduction, iterated brackets in our eigenbasis span $L_n\otimes_{\Q_p}L$. Their characters are
	products of the corresponding $\chi_i$, so the $G_F$-action factors
	through $\Gamma$ and every character can be written as $\chi_1^n u$ with
	$u\in B$. This allows us to write $$L_n \otimes_{\Q_p} L \simeq \bigoplus_{u \in B} \left( L_n \otimes_{\Q_p} L \right)[\chi_1^n u].$$ Thus $\sum_{u \in B} m_n(\chi_1^n u) = d_n$, and 
	$$\mu_n=\frac{1}{d_n} \sum_{u \in B} m_n(\chi_1^n u)\delta_u$$
	is a probability measure on $B_p$ for each $n$.

	We want to realize the Fourier coefficients of $\mu_n$ as normalized traces. In order to apply a symplectic character formula and control them, we extend coefficients to $\C$ and work with symplectic matrices there\footnote{It is possible to avoid working over $\C$ and consider $\mathrm{Sp}_{2g}$ over some number field instead, but the character formula we use is an identity of Laurent polynomials over $\C,$ so we believe this approach is simpler.}. Choose a (non-canonical) isomorphism $\overline{\Q}_p \simeq \C,$ inducing an embedding $\Q_p \hookrightarrow \C.$ This allows us to define $L_{n, \C}=L_n \otimes_{\Q_p} \C.$ Choose a basis of $L_n \otimes_{\Q_p} L$ consisting of iterated brackets in the fixed symplectic eigenbasis of $V_L$. Its scalar extension gives a basis of $L_{n,\C}$. Given $h \in \mathrm{Sp}_{2g}(\C),$ we denote the matrix representing the induced action on $L_{n,\C}$ by $(h \ | \ L_{n,\C}).$ Explicitly, this action is given multilinearly on the brackets, acting as $h$ on each element of $V_{\C}=L_{1,\C}.$ Notice that this action is well-defined: after writing
	$$\mathrm{gr}(U) \otimes_{\Q_p} \C \simeq \mathbb{L}(V_{\C})/(\omega), \qquad \omega=\sum_{j=1}^g[e_{x_j}, e_{y_j}],$$ we see that $$h(\omega)
	=\frac{1}{2}\sum_{i,j=1}^{2g}\left(h\Omega h^T\right)_{ij}[e_i,e_j]
	=\frac{1}{2}\sum_{i,j=1}^{2g}\Omega_{ij}[e_i,e_j]
	=\omega, \qquad \text{where} \ \  \Omega=\begin{pmatrix}
		0 & I_g \\
		-I_g & 0
	\end{pmatrix},$$ because $h$ is symplectic.
	We now prove a symplectic trace estimate. Let
	$\lambda:=g+\sqrt{g^2-1}$.
	\begin{lemma}\label{trace}
		Let $h \in \mathrm{Sp}_{2g}(\C)$ such that $h \neq \pm I$ and $h^N=I$ for some $N >0.$ Then $\Tr\left(h \ | L_{n,\C} \right) =  o(d_n).$
	\end{lemma}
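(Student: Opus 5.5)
We plan to compute the trace through the graded character of $\mathrm{gr}(U)\otimes\C$ as an $\mathrm{Sp}_{2g}$-representation, i.e. the symplectic analogue of Witt's dimension formula (\cite[Proposition~4.4]{Fil13}). Since $h^N=I$, $h$ is diagonalizable; after conjugating in $\mathrm{Sp}_{2g}(\C)$ we may assume $h=\mathrm{diag}(z_1,\dots,z_g,z_1^{-1},\dots,z_g^{-1})$ with the $z_j$ roots of unity. Traces are conjugation invariant and the action on $L_{n,\C}$ is functorial, so this reduction is harmless. The cited formula gives a Möbius-type expression
$$\Tr\left(h\mid L_{n,\C}\right)=\frac1n\sum_{m\mid n}\mu(m)\,P_{n/m}\!\left(h^m\right),$$
where $P_k(h)$ is a polynomial in the power sums $p_j(h)=\Tr(h^j\mid V_\C)$, $j\mid k$. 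Concretely, $P_k(h)$ is the $k$-th power-sum coefficient of $-\log(1-p_1(h)t+t^2)$, i.e. $P_k(h)=\alpha(h)^k+\beta(h)^k$ with $\alpha,\beta$ the roots of $x^2-p_1(h)x+1$. This reflects $\mathrm{gr}(U)=\mathbb{L}(V)/(\omega)$, whose universal enveloping algebra has Hilbert series $(1-\dim V\, t+t^2)^{-1}$ equivariantly. The specialization $h=I$ gives $d_n=\frac1n\sum_{m\mid n}\mu(m)(\lambda^{n/m}+\lambda^{-n/m})$, using $\lambda=g+\sqrt{g^2-1}$ as the largest root of $x^2-2gx+1$. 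Hence $d_n\sim\lambda^n/n$.

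The estimate then splits into the $m=1$ term and the rest. For $m\ge2$ we have $|P_{n/m}(h^m)|\le 2\lambda^{n/m}\le2\lambda^{n/2}$, because $|p_1(h^m)|\le 2g$ forces the roots of $x^2-p_1x+1$ to have modulus at most $\lambda$. With at most $n$ divisors, these terms contribute $O(\lambda^{n/2})=o(\lambda^n/n)$. For the $m=1$ term, set $s=p_1(h)=\sum_j(z_j+z_j^{-1})$. Since $h\ne\pm I$, some $z_j\ne z_1$ or some $z_j\ne\pm1$. Either way, the triangle inequality with equality analysis gives $|s|<2g$. Write $\alpha(s)$ for the larger root in modulus. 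Then $|\alpha(s)|<\lambda$ strictly: if $|\alpha|=\lambda$ with $|\alpha\beta|=1$, then $|s|=|\alpha+\alpha^{-1}|\ge\lambda-\lambda^{-1}$, and more precisely the map $x\mapsto x+x^{-1}$ sends the circle $|x|=\lambda$ onto an ellipse whose only points of modulus $\ge2g$ are $\pm2g$. Since $|s|<2g$, we get $|\alpha|<\lambda$. Thus the $m=1$ term is $O(|\alpha|^n/n)=o(\lambda^n/n)=o(d_n)$.

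The main obstacle is the first step: pinning down exactly what \cite[Proposition~4.4]{Fil13} provides and checking that it applies to the quotient $\mathbb{L}(V_\C)/(\omega)$ with the action defined above, in particular that the relation is $\mathrm{Sp}_{2g}$-invariant, which was verified before the lemma. If the formula is stated as a Laurent-polynomial identity in torus variables, we evaluate it at the roots of unity $z_j$. If it is instead phrased via characters of $\mathrm{Sp}_{2g}$-irreducibles, we will first rederive the power-sum form from the equivariant Hilbert series $\mathrm{ch}\,U(\mathrm{gr})=\left(1-\mathrm{ch}(V)t+t^2\right)^{-1}$ by taking logarithms, as in the classical Witt/Brandt argument. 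The strict inequality $|\alpha(s)|<\lambda$ is the other delicate point, and it is exactly where $h\neq\pm I$ enters, since $h=-I$ gives $s=-2g$ and $|\alpha|=\lambda$.
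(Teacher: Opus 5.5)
Your proposal is correct and is essentially the paper's proof. You use the same M\"obius-inverted formula from \cite[Proposition~4.4]{Fil13} with $P_k(s)=\alpha^k+\beta^k$, and you make the same split: the $m=1$ term, where $h\neq\pm I$ forces $|s_1|<2g$ by the equality case of the triangle inequality, and the $m\geq 2$ terms, which contribute $O(\lambda^{n/2})$. You also get $d_n\sim\lambda^n/n$ from $h=I$, as the paper does.

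The one step to tighten is that both of your root-modulus bounds rely on $s_m=\Tr(h^m\mid V_{\C})$ being real. This holds since $z_j^{-1}=\overline{z_j}$, and the paper states it explicitly as $s_e\in[-2g,2g]$. Your ellipse argument only rules out $|\alpha|=\lambda$, and for non-real $s$ with $|s|<2g$ one can in fact have $|\alpha|>\lambda$ (e.g.\ $s=iy$ with $2\sqrt{g^2-1}<y<2g$). For real $s$, the larger root has modulus $\max\{1,(|s|+\sqrt{s^2-4})/2\}$, which is nondecreasing in $|s|$, so both $|\alpha|\leq\lambda$ and the strict bound $|\alpha|<\lambda$ when $|s|<2g$ follow at once.
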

	\begin{proof}
		Let $s_e=\Tr\left(h^e \ | \ V_{\C} \right).$ Since $h^N=I,$ $h$ is a diagonalizable matrix and its eigenvalues have absolute value $1.$ $h^e$ is symplectic as well, so we can organize its eigenvalues in pairs $\alpha_1, \overline{\alpha_1}, \dots, \alpha_g, \overline{\alpha_g}.$ We then have $\alpha_i + \overline{\alpha_i}=2\mathrm{Re}(\alpha_i) \in [-2,2],$ hence $s_e \in [-2g, \ 2g].$ Given $s \in \mathbb{R}$ (it will be one of the $s_e$), let $\alpha, \beta$ be the roots of $x^2-sx+1,$ and let $P_m(s)=\alpha^m+\beta^m.$ The symplectic character formula \cite[Theorem~4.3(ii)]{MSS15} in the explicit form of \cite[Proposition~4.4]{Fil13} gives
		\begin{equation}\label{inversion}
			\Tr\left(h \ | \ L_{n, \C} \right) = \frac{1}{n} \sum_{e \mid n} \mu_{\mathrm{Mob}}(e) P_{n/e}(s_e),
		\end{equation}
		where $\mu_{\mathrm{Mob}}$ is the Möbius function\footnote{We denote it by $\mu_{\mathrm{Mob}}$ to avoid confusion with the Haar measure $\mu$ on $B_p$.}. Since $|s_e| \leq 2g,$ then $|P_m(s_e)| \leq 2 \lambda^m.$ In particular, $|s_1| < 2g,$ as equality would make all its eigenvalues equal, and $h \in \lbrace I, -I \rbrace.$ Hence $\rho_h:=\max \lbrace |\alpha|, |\beta|: \alpha, \beta \ \text{roots of } x^2-s_1x+1 \rbrace < \lambda,$ and we conclude
		\begin{align*}
			\left| \Tr\left(h \ | \ L_{n, \C} \right) \right| \leq \frac{2\rho_h^n}{n}+\frac{2}{n} \sum_{\substack{e\mid n\\ e\geq 2}} \lambda^{n/e} 
			\leq \frac{2\rho_h^n}{n} + 2 \lambda^{n/2} = o(d_n).
		\end{align*}
		The last step uses $d_n \sim \frac{\lambda^n}{n},$ which is recorded in \cite[p.~123]{Kim09}. This can also be seen by choosing $h=I$ in (\ref{inversion}).
	\end{proof}
	We will use this estimate in conjunction with Weyl's criterion in the proof of Theorem \ref{conv}. The matrices we will work with are going to be diagonal in the CM eigenbasis by construction. Writing $h e_i=\lambda_i e_i,$ we see that 
	\begin{equation}\label{traceh}
		h [e_{i_1}, [e_{i_2}, \dots, e_{i_n}] \dots ]=\left(\prod_{j=1}^n \lambda_{i_j} \right) [e_{i_1}, [e_{i_2}, \dots, e_{i_n}] \dots ]
	\end{equation}
	for such matrices.
	When non-zero, the bracket above has Galois character
	$\prod_{j=1}^n\chi_{i_j}=\chi_1^n u$, with
	$u=\prod_{j=1}^n u_{i_j}$. We will exploit this in the following proof to connect traces to Fourier coefficients.
	\begin{theorem}\label{conv}
		The measures $\mu_n$ weakly converge to $\mu$.
		In particular, for every $k\geq1$ and $r\in B_p$,
		we have, for the projection
		$\pi_k:B_p\rightarrow B/B^{p^k}$,
		$$\mu_n\left(\pi_k^{-1}\left(\pi_k(r)\right)\right)
		=
		\frac{1}{d_n}
		\sum_{\substack{u\in B\\\pi_k(u)=\pi_k(r)}}
		m_n(\chi_1^n u)
		\xrightarrow{n\to\infty}p^{-ak}.$$
	\end{theorem}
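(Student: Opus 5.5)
The plan is to apply Weyl's criterion on the compact abelian group $B_p\simeq\Z_p^a$. It suffices to show that for every non-trivial continuous character $\eta:B_p\to S^1$ we have $\int_{B_p}\eta\,d\mu_n\to0$.

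\emph{Structure of the test characters.} Any continuous character of $\Z_p^a$ has finite image in $\mu_{p^\infty}\subset\C^\times$. So $\eta$ factors through $B/B^{p^k}$ for some $k$, and takes values in $p^k$-th roots of unity. The second assertion, about residue classes, follows from weak convergence: the indicator of $\pi_k^{-1}(\pi_k(r))$ is continuous, since the class is clopen, and it has Haar measure $p^{-ak}$. Alternatively, one can apply finite Fourier inversion on $B/B^{p^k}$ directly to the Fourier coefficients.

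\emph{Constructing the symplectic operator.} Given $\eta$, set $\zeta_i:=\eta(u_i)$ for $i=1,\dots,2g$; note $\zeta_1=1$. Let $h$ be diagonal in the symplectic eigenbasis with $he_i=\lambda_ie_i$. We want $\prod_j\lambda_{i_j}=\eta\big(\prod_j u_{i_j}\big)$, at least up to a factor depending only on $n$. The constraint is that $h\in\mathrm{Sp}_{2g}(\C)$, i.e. $\lambda_{x_i}\lambda_{y_i}=1$. Since $\chi_{x_i}\chi_{y_i}=\chi_{\mathrm{cyc}}$, we get $u_{x_i}u_{y_i}=\chi_1^{-2}\chi_{\mathrm{cyc}}=:w\in B$, independent of $i$. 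Choose $c\in\C$, a root of unity, with $c^2=\eta(w)^{-1}$, and put $\lambda_i:=c\,\eta(u_i)$. Then $\lambda_{x_i}\lambda_{y_i}=c^2\eta(w)=1$, so $h$ is symplectic. Also $h^N=I$ for some $N$, because everything is a root of unity.

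\emph{Relating traces to Fourier coefficients.} By (\ref{traceh}), each nonzero bracket of character $\chi_1^nu$ is an eigenvector of $h$ with eigenvalue $c^n\eta(u)$. This holds on the chosen bracket basis, which is compatible with the decomposition into $\chi_1^nu$-isotypic pieces, and trace is basis-independent. Hence
$$\Tr(h\mid L_{n,\C})=c^n\sum_{u\in B}m_n(\chi_1^nu)\eta(u)=c^n d_n\int_{B_p}\eta\,d\mu_n.$$
Since $|c|=1$, Lemma~\ref{trace} gives $\int\eta\,d\mu_n=o(1)$, provided $h\neq\pm I$.

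\emph{Main obstacle: showing $h\neq\pm I$.} If $h=\pm I$, then all $\lambda_i$ are equal, so $\eta(u_i)=\eta(u_1)=1$ for every $i$. Since the $u_i$ generate $B$, which is dense in $B_p$, and $\eta$ is continuous, this forces $\eta\equiv1$, a contradiction. The remaining care lies in two places:
\begin{enumerate}
\item checking that $c$ can be chosen as a root of unity, which holds because $\eta(w)$ is a root of unity;
\item checking that the identification of the $\mu_n$-weights with the eigenvalue multiplicities is exact, i.e. that $m_n$ counts dimensions of isotypic pieces spanned by bracket basis elements.
\end{enumerate}
Granting these, Weyl's criterion yields $\mu_n\to\mu$ weakly.
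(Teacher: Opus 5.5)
Your proposal is correct and follows the paper's proof essentially step for step. Both arguments use Weyl's criterion, the diagonal symplectic operator with entries $c\,\eta(u_i)$ where $c^{2}=\eta(\chi_{\mathrm{cyc}}\chi_1^{-2})^{-1}$ (the paper's $z^{-1}$), the trace identity from (\ref{traceh}) combined with Lemma~\ref{trace}, the non-scalarity argument via $\eta(u_1)=1$ and density of $B$, and the continuity of the indicator of the clopen coset. The two points you flag as needing care do hold, and in exactly the form the paper uses them.
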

	\begin{proof}
		Consider the Pontryagin dual $\widehat{B_p}=\mathrm{Hom}_{\mathrm{cts}}\left(B_p, S^1 \right).$ By Weyl's criterion for compact abelian groups \cite[Theorem B.6.3]{Kow21}\footnote{We apply it to $G=B_p$ and random variables $X_n$ satisfying $\mathbb{P}(X_n=u)=\frac{m_n(\chi_1^n u)}{d_n}.$ Hence $\mathbb{E}[\eta(X_n)]=\int_{B_p} \eta \ d\mu_n,$ and for $X$ uniformly distributed on $B_p,$ $X_n \xrightarrow{\text{law}} X$ is equivalent to $\mu_n$ weakly converging to $\mu.$}, it suffices to show, for all $1 \neq \eta \in \widehat{B_p},$ that
		$$\int_{B_p} \eta \  d\mu_n \xrightarrow{n \to \infty} 0.$$
		Fix one such $\eta.$ Since $\eta(1)=1$ and $\eta$ is continuous, there exists $k \geq 1$ such that $$\eta \left(B_p^{p^k}\right) \subset \lbrace z \in S^1: \mathrm{Re}(z)>0 \rbrace.$$ However, the only such subgroup of $S^1$ is $\lbrace 1 \rbrace,$ so $\eta(x)^{p^k}=1$ for all $x \in B_p.$ The values $\eta(u_i)$ have the same product $z^2$ on each symplectic pair, where
		$z=\sqrt{\eta\left(\chi_{\mathrm{cyc}}\chi_1^{-2}\right)}.$ We consider the matrix
		$$h_\eta=\mathrm{diag}\left(z^{-1}\eta(u_i)\right)_{i=1}^{2g}$$ acting on $V_{\C},$ written in terms of the symplectic CM eigenbasis. Clearly, $h_\eta^{2p^k}=I.$ By construction, paired entries of $h_\eta$ have product $1,$ so $h_\eta \in \mathrm{Sp}_{2g}(\C).$ If $h_\eta$ were a scalar matrix, every $\eta(u_i)$ would be equal to $\eta(u_1)=\eta(1)=1.$ This contradicts $\eta \neq 1,$ because the $u_i$ topologically generate $B_p,$ and $\eta$ is continuous. Therefore, we can apply Lemma \ref{trace} to $h_\eta$ and conclude
		\begin{equation}\label{comp}
			\int_{B_p} \eta \ d\mu_n = \frac{1}{d_n} \sum_{u \in B} m_n\left(\chi_1^n u \right) \eta(u) = \frac{z^n}{d_n} \Tr\left(h_\eta \ | \ L_{n, \C} \right) \xrightarrow{n \to \infty} 0.
		\end{equation}
		We are using (\ref{traceh}) for the second equality in (\ref{comp}): writing $\lambda_i=z^{-1}\eta(u_i),$ we see that
		$h_\eta$ acts on $L_{n,\C}[\chi_1^n u]$ via the product character
		$z^{-n}\eta(u),$ and the trace accounts for its multiplicity.

		Finally, the indicator function $\mathrm{1}_D$ of $D=\pi_k^{-1} \left( \pi_k(r)\right)=rB_p^{p^k}$ is constant on each open coset of $B_p^{p^k},$ hence continuous on $B_p$. Applying weak convergence,
		$$\mu_n(D)=\int_{B_p} \mathrm{1}_D\ d \mu_n \rightarrow \int_{B_p} \mathrm{1}_D \ d \mu = \mu(D)=p^{-ak}. $$
	\end{proof}
	
	\section{Iwasawa estimate}\label{est}
	We define the Iwasawa class module $Y_R.$ An
	annihilating relation forces every character seen by this module to satisfy a $p$-adic analytic equation, and we apply equidistribution jointly with a simple result proven in this section for analytic loci to bound the multiplicities of these exceptional characters in $L_n^\vee(1) \otimes_{\Q_p} L.$
	\subsection{Iwasawa class module}\label{classmodule}
	Fix a $\Z_p$-basis $\gamma_1, \dots, \gamma_c$ of $\Gamma.$ Define the Iwasawa algebra
	$$\Lambda:=\Z_p \llbracket \Gamma \rrbracket \simeq \Z_p \llbracket T_1, \dots, T_c \rrbracket, \qquad T_j=\gamma_j-1,$$
	and its base-changed localization
	$$R:=\mathcal{O}_L \llbracket \Gamma \rrbracket[1/p] \simeq \mathcal{O}_L \llbracket T_1, \dots, T_c \rrbracket [1/p].$$
	$R$ is a Noetherian domain. Moreover, it is an $L$-algebra because $\mathcal{O}_L[1/p]=L.$ Any continuous character $\theta: \Gamma \rightarrow 1+p \mathcal{O}_L \subset L^\times$ extends to an $L$-algebra homomorphism
	$$\theta: R \rightarrow L, \qquad T_j \mapsto \theta(\gamma_j)-1,$$ because $|\theta(\gamma_j)-1|_p < 1$ by hypothesis. Given $r \in R,$ we write $r(\theta)$ for its image under this homomorphism, to regard elements in $R$ as functions given by power series.
	
	Let $M/F_\infty$ be the maximal everywhere unramified abelian pro-$p$ extension of $F_\infty$ in which every place above $S$ splits completely, and define the class module\footnote{The possibility of taking smaller extensions than theirs is mentioned in \cite{CK10}. Ours is smaller due to extra splitting conditions. This reflects the fact that we don't need to impose splitting conditions like they do on $p$ (even though one can always find infinitely many primes satisfying them), by including an extra observation in the proof of Proposition \ref{asy}.}
	$$Y=\Gal(M/F_\infty).$$
	Since $M/F$ is Galois, there is an exact sequence 
	$$1 \rightarrow Y \rightarrow \Gal(M/F) \rightarrow \Gamma \rightarrow 1.$$
	As $Y$ is abelian, conjugation by lifts of $\gamma \in \Gamma$ to $\Gal(M/F)$ gives a continuous action of $\Gamma$ on $Y,$ making it a compact $\Lambda$-module. 
	
	By the discussion following \cite[Proposition~4.7]{Ven03}, $Y$ is a finitely generated $\Lambda$-module. Moreover, since $F(\mu_{p^\infty}) \subset F_\infty$, \cite[Corollary~4.8(i)]{Ven03} proves that $Y$ is a torsion $\Lambda$-module.

	Let $$Y_R:=Y \otimes_{\Z_p} L \simeq Y \otimes_{\Lambda} R.$$ Since $R \simeq \Lambda \otimes_{\Z_p} L$ is flat over $\Lambda,$ $Y_R$ is finitely generated and torsion over $R.$ Let $b_Y$ be the cardinality of a set of $R$-generators, and let $0 \neq \delta \in R$ be an annihilator of $Y_R,$ i.e. $$\delta Y_R =0.$$
	
	\begin{lemma}\label{iweq}
		Let $W$ be a finite-dimensional $L$-representation of the form
		$$W \simeq \bigoplus_{\theta} L(\theta)^{m(\theta)},$$
		where the $\theta: \Gamma \rightarrow 1+ p \mathcal{O}_L$ are distinct continuous characters with multiplicities $m(\theta).$ Regard $W$ as an $R$-module via the induced map $\theta: R \rightarrow L;$ $R$ acts on each summand through its character. Then
		$$\dim_L \Hom_R(Y_R,W) \leq b_Y \sum_{\delta(\theta)=0} m(\theta).$$
	\end{lemma}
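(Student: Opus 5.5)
The plan is to decompose the Hom along the eigenspace decomposition of $W$ and show that the only summands contributing are those with $\delta(\theta)=0$, each contributing at most $b_Y m(\theta)$. Since $W\simeq\bigoplus_\theta L(\theta)^{m(\theta)}$ as $R$-modules (each summand $L(\theta)$ being $R/\mathfrak m_\theta$ with $\mathfrak m_\theta=\ker(\theta:R\to L)$), we have
$$\Hom_R(Y_R,W)\simeq\bigoplus_\theta \Hom_R(Y_R,L(\theta))^{m(\theta)},$$
so it suffices to bound $\dim_L\Hom_R(Y_R,L(\theta))$ for a single character $\theta$. This bound should be $0$ if $\delta(\theta)\neq 0$ and at most $b_Y$ otherwise.

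For the vanishing, let $\phi:Y_R\to L(\theta)$ be $R$-linear. Then for any $y\in Y_R$,
$$\delta(\theta)\phi(y)=\delta\cdot\phi(y)=\phi(\delta y)=\phi(0)=0,$$
because $R$ acts on $L(\theta)$ through the homomorphism $\theta:R\to L$. If $\delta(\theta)\neq0$, this forces $\phi=0$.

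For the upper bound, choose $R$-generators $y_1,\dots,y_{b_Y}$ of $Y_R$. An $R$-linear map $\phi$ is determined by the values $\phi(y_1),\dots,\phi(y_{b_Y})\in L(\theta)\simeq L$. So the evaluation map $\Hom_R(Y_R,L(\theta))\to L^{b_Y}$ is injective and $L$-linear, where the $L$-structure on the Hom comes from $L\subset R$ acting on the target. This gives dimension at most $b_Y$. Summing over $\theta$ with multiplicities $m(\theta)$ yields the claim.

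None of these steps is a real obstacle. The only point to check is that the splitting of $W$ is really a splitting of $R$-modules, i.e. that the $R$-action on $W$ is the one extended continuously from $\Gamma$. On each summand this is the map $T_j\mapsto\theta(\gamma_j)-1$ from Section \ref{classmodule}, which is well defined because $|\theta(\gamma_j)-1|_p<1$. Since distinct summands are stable under this action, the direct sum decomposition is $R$-linear. It is also consistent with how $\Hom_R(Y_R,W)$ is an $L$-vector space via $L\subset R$.
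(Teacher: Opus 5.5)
Your proposal is correct and follows the paper's proof essentially verbatim: you decompose $\Hom_R(Y_R,W)$ along the character summands, use $\delta(\theta)\phi(y)=\phi(\delta y)=0$ to kill every summand with $\delta(\theta)\neq 0$, and bound each remaining one by $b_Y$ via evaluation on generators. Your extra check that the splitting of $W$ is $R$-linear is a harmless addition that the paper leaves implicit.
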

	\begin{proof}
		We can write
		$$\Hom_R(Y_R,W)=\Hom_R\left( Y_R, \bigoplus_\theta L(\theta)^{m(\theta)}\right) \simeq \bigoplus_\theta \Hom_R \left(Y_R, L(\theta) \right)^{m(\theta)}.$$
		Any $\varphi \in \Hom_R(Y_R, L(\theta))$ satisfies, for all $y \in Y_R,$ $\delta(\theta)\varphi(y) = \varphi(\delta y)=\varphi(0)=0,$ meaning $\varphi = 0$ unless $\delta(\theta) = 0,$ in which case $\varphi$ is determined by its values on the $b_Y$ generators, so
		$$\dim_L \Hom_R(Y_R, L(\theta)) \leq b_Y,$$
		and the result follows from summing over the characters.
	\end{proof}
	
	\subsection{Moving analytic loci}\label{moving}
	Equidistribution
	controls the asymptotic measure of fixed residue classes, but the loci with exceptional characters vary with $n$. We deal with this inconvenience by showing a compactness result for said loci. Throughout this subsection we
	work on $\Z_p^a$ instead of $B_p$ for convenience.

	Recall that the Tate algebra $L\langle x_1, \dots, x_a, t \rangle$ consists of restricted power series, i.e. formal power series $f=\sum_{\alpha,j} c_{\alpha,j} x^\alpha t^j$ with $c_{\alpha,j} \in L$ such that $|c_{\alpha,j}| \to 0$. Any such $f$ converges uniformly on $\Z_p^a \times \Z_p,$ and defines a continuous function. The Gauss norm $\lVert f \rVert = \max |c_{\alpha,j}|$ is multiplicative, and satisfies $|f(x,t)| \leq \lVert f \rVert$ on $\Z_p^a \times \Z_p.$ Let $\mu$ be the unique Haar probability measure on $\Z_p^a.$ Then $\mu(x_0+p^k \Z_p^a)=p^{-ak}$ for all $x_0 \in \Z_p^a.$
	
	We recall the proof of Strassmann's Theorem because a similar factorization argument will be used in Lemma \ref{slice}. 
	\begin{lemma}[Strassmann]\label{strassman}
		Let $0 \neq h = \sum_{j=0}^\infty a_j t^j \in L \langle t \rangle,$ and let $N$ be the largest index with $|a_N|=\max_j |a_j|.$ Then $h$ has at most $N$ zeros in $\Z_p.$
	\end{lemma}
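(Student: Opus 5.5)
The plan is the classical Weierstrass-preparation-style argument by induction on $N$, factoring out a linear term $t-z$ for each zero $z\in\Z_p$ of $h$ and tracking what happens to the index $N$ under division. First I normalize: scaling $h$ by a constant does not change its zeros or $N$, so I may assume $\lVert h\rVert=1$, i.e. $\max_j|a_j|=1$, $|a_N|=1$ and $|a_j|<1$ for all $j>N$. Since $|a_j|\to0$, there are only finitely many indices with $|a_j|=1$, so $N$ is well defined.

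\textbf{Base case $N=0$.} Here $|a_0|=1$ and $|a_j|<1$ for $j\geq1$. For $t\in\Z_p$ we have $|a_jt^j|\leq|a_j|<1$ for $j\geq1$, and these terms tend to $0$. By the ultrametric inequality, $\bigl|\sum_{j\geq1}a_jt^j\bigr|<1=|a_0|$. Hence $|h(t)|=1$ and $h$ has no zeros in $\Z_p$.

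\textbf{Inductive step.} Suppose $N\geq1$ and $h(z)=0$ for some $z\in\Z_p$ (if there is no zero we are done). Write
\[
h(t)=h(t)-h(z)=\sum_{j\geq1}a_j(t^j-z^j)=(t-z)\sum_{j\geq1}a_j\sum_{i=0}^{j-1}t^iz^{j-1-i}=(t-z)\,g(t),
\]
with $g=\sum_i b_it^i$ and $b_i=\sum_{j>i}a_jz^{j-1-i}$. This series converges because $|z|\leq1$ and $|a_j|\to0$, and $|b_i|\leq\sup_{j>i}|a_j|\to0$, so $g\in L\langle t\rangle$.

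Next I compute the Gauss norm of $g$ and its distinguished index. For $i\geq N$, every $j>i$ has $|a_j|<1$, so $|b_i|<1$. For $i=N-1$, we have $b_{N-1}=a_N+\sum_{j>N}a_jz^{j-N}$, which has absolute value exactly $1$ by the ultrametric inequality. For $i<N-1$ we only get $|b_i|\leq1$. Thus $\lVert g\rVert=1$ and the largest index attaining the maximum is $N-1$. Alternatively, multiplicativity of the Gauss norm gives $\lVert g\rVert=\lVert h\rVert=1$ directly, since $\lVert t-z\rVert=1$.

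By induction, $g$ has at most $N-1$ zeros in $\Z_p$. Every zero of $h$ in $\Z_p$ is either $z$ or a zero of $g$, since $L$ is a field. Hence $h$ has at most $N$ zeros.

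The main point needing care is the index bookkeeping: showing that $|b_{N-1}|=1$ exactly and $|b_i|<1$ for $i\geq N$. This relies on $|z|\leq1$, i.e. $z\in\Z_p$, which is where restricting to zeros in $\Z_p$ (rather than in an extension of larger absolute value) is essential. It is also the step whose factorization pattern I expect to reuse in Lemma \ref{slice}.
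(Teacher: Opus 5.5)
Your proof is correct and follows the paper's argument essentially verbatim. It uses the same base case $N=0$, the same factorization $h(t)=(t-z)\sum_i b_it^i$ with $b_i=\sum_{j>i}a_jz^{j-1-i}$, the same check that $|b_{N-1}|=|a_N|$ while $|b_i|<|a_N|$ for $i\geq N$, and then induction on $N$; the normalization $\lVert h\rVert=1$ and the remark via multiplicativity of the Gauss norm are harmless conveniences not present in the paper.
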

	\begin{proof}
		If $N=0$ then $|a_0| > \max_{j \geq 1}|a_j|,$  so $|h(t)|=|a_0|>0$ for all $t \in \Z_p.$ If $N \geq 1$ and $u \in \Z_p$ is a zero, then $h(t)= h(t)-h(u)=(t-u) \sum_{j=0}^\infty b_j t^j$ with $b_j = \sum_{k > j} a_k u^{k-j-1}.$ Hence $b_j \to 0, \ |b_j| \leq \max_{k > j} |a_k|,$ and $b_{N-1}= a_N + \sum_{k > N} a_k u^{k-N}$ satisfies $|b_{N-1}|=|a_N|,$ so $N-1$ is the largest index at which the maximum of the $|b_j|$ is attained, and the quotient has at most $N-1$ zeros by induction, so $h$ has at most $N$ zeros.
	\end{proof}
	In particular, a non-zero restricted series in one variable has finitely many zeros in $\Z_p.$ One can also prove by induction on $a$ via Fubini that a non-zero restricted power series in $a$ variables has a zero set of Haar measure zero, as recorded in the following statement.
	\begin{lemma}\label{fubini}
		Let $0 \neq h \in L\langle x_1, \dots, x_a \rangle$ and $Z(h)= \lbrace x \in \Z_p^a: h(x)=0 \rbrace.$ Then $\mu(Z(h))=0.$
	\end{lemma}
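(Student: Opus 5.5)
We argue by induction on $a$. For $a=1$, Lemma \ref{strassman} shows that a non-zero $h\in L\langle x_1\rangle$ has only finitely many zeros in $\Z_p$. Since $\mu$ has no atoms ($\mu(\{x\})\le\mu(x+p^k\Z_p)=p^{-k}$ for every $k$), we get $\mu(Z(h))=0$. For $a=0$ there is nothing to prove, since $h$ is then a non-zero constant and $Z(h)=\emptyset$.

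For the inductive step, take $a\ge 2$ and regard $h$ as a restricted series in $x_a$ with coefficients in $L\langle x_1,\dots,x_{a-1}\rangle$:
\[
h=\sum_{j\ge 0} h_j(x')\,x_a^j,\qquad x'=(x_1,\dots,x_{a-1}),\quad h_j\in L\langle x'\rangle,
\]
with $\lVert h_j\rVert\to 0$. This rearrangement is legitimate because $|c_{\alpha,j}|\to 0$. Since $h\neq 0$, some $h_{j_0}\neq 0$. The Haar measure on $\Z_p^a$ is the product of the Haar measures on $\Z_p^{a-1}$ and $\Z_p$, since the product measure is translation invariant and gives mass $p^{-ak}$ to the balls $x_0+p^k\Z_p^a$. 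The set $Z(h)$ is closed, because $h$ is continuous, and hence measurable. Fubini--Tonelli then gives
\[
\mu(Z(h))=\int_{\Z_p^{a-1}}\mu_1\bigl(\{t\in\Z_p: h(x',t)=0\}\bigr)\,d\mu_{a-1}(x').
\]

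Split the base into $E=Z(h_{j_0})\subset\Z_p^{a-1}$ and its complement. By the inductive hypothesis $\mu_{a-1}(E)=0$. Now take $x'\notin E$. The specialization $h(x',t)=\sum_j h_j(x')t^j$ lies in $L\langle t\rangle$, because $|h_j(x')|\le\lVert h_j\rVert\to 0$. It is non-zero, since its $t^{j_0}$ coefficient $h_{j_0}(x')$ is non-zero. So by the case $a=1$ its zero set in $\Z_p$ has $\mu_1$-measure zero. The integrand therefore vanishes outside the null set $E$, and $\mu(Z(h))=0$.

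The only point needing care is that specialization $x'\mapsto h(x',\cdot)$ lands in $L\langle t\rangle$ and that the fibre function is measurable so Fubini applies. The first follows from the bound $|h_j(x')|\le\lVert h_j\rVert$. The second follows because $Z(h)$ is closed in a compact metrizable space, so all the measure-theoretic hypotheses hold.
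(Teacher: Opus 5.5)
Your proof is correct and takes essentially the same approach as the paper, which defers to Lemma 4.4 of DR25 and describes the argument as an induction on $a$ via Fubini, with Strassmann's theorem handling the one-variable case. Your treatment of the fibres matches that strategy: you discard the null set $Z(h_{j_0})$ so that each remaining slice $h(x',\cdot)$ is a non-zero element of $L\langle t\rangle$.
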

	\begin{proof}
		The proof of \cite[Lemma~4.4]{DR25} works verbatim after replacing $p=2$ by an arbitrary prime.
	\end{proof}
	We finally prove the main auxiliary lemma of this subsection, which will enable us to apply equidistribution to the exceptional character multiplicities. Its proof is rather elementary, but we couldn't find this result in the literature.
	\begin{lemma}\label{slice}
		Let $0 \neq f \in L\langle x_1, \dots, x_a,t \rangle,$ and let $(\mu_n)_n$ be a sequence of probability measures on $\Z_p^a$ that, for every fixed $k,$ $\mu_n(x_0+p^k\Z_p^a) \longrightarrow \mu(x_0+p^k\Z_p^a)$ for all $x_0 \in \Z_p^a$. Then,
		$$\mu_n \lbrace x \in \Z_p^a: f(x,n)=0 \rbrace \xrightarrow{n \to \infty} 0.$$
	\end{lemma}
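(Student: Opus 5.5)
The strategy is to cover the moving zero sets $Z_n:=\{x\in\Z_p^a: f(x,n)=0\}$, uniformly in $n$, by finitely many fixed clopen sets. Each such set will be a finite union of residue classes modulo $p^k$ with Haar measure less than $\varepsilon$. The hypothesis on $(\mu_n)_n$ then gives $\limsup_n \mu_n(Z_n)\leq\varepsilon$.

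\emph{Step 1: removing degenerate slices.} For $t\in\Z_p$ write $f_t(x)=f(x,t)\in L\langle x_1,\dots,x_a\rangle$. Expand $f=\sum_\alpha c_\alpha(t)x^\alpha$ with $c_\alpha\in L\langle t\rangle$, and pick some $c_\alpha\neq 0$. By Lemma \ref{strassman}, $c_\alpha$ has finitely many zeros, so $f_t\equiv 0$ for only finitely many $t\in\Z_p$. If $f_u\equiv 0$, I divide $f$ by $(t-u)$ exactly as in the proof of Lemma \ref{strassman}, with coefficients in $L\langle x\rangle$ instead of $L$. Write $f=\sum_j a_j(x)t^j$; since $\sum_j a_j(x)u^j=0$ identically, $f=(t-u)\sum_j b_j(x)t^j$ with $b_j=\sum_{k>j}a_ku^{k-j-1}\to 0$, so the quotient $f^{(1)}$ is again a non-zero restricted series. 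The set of degenerate slices of the quotient is contained in that of $f$, so this cannot loop forever along new points. To guarantee termination, I bound the number of divisions at a fixed $u$ by the order of vanishing of $c_\alpha$ at $u$, which is finite by Lemma \ref{strassman} and drops by one at each division. After finitely many steps we reach $f=\prod_i(t-u_i)\cdot g$, where $g\neq0$ and $g_t\not\equiv0$ for every $t\in\Z_p$. For all $n$ outside the finite set $\{u_i\}\cap\Z$ we have $Z_n=Z(g_n)$. So it suffices to prove the lemma for $g$.

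\emph{Step 2: local uniformity.} Fix $\varepsilon>0$ and $t_0\in\Z_p$. By Lemma \ref{fubini}, $\mu(Z(g_{t_0}))=0$. The closed sets $A_\delta=\{x: |g_{t_0}(x)|\leq\delta\}$ decrease to $Z(g_{t_0})$ as $\delta\to0$, so I can choose $\delta>0$ with $\mu(A_\delta)<\varepsilon/2$. The set $A_\delta$ is compact and equals the intersection of its $p^{-k}$-neighbourhoods. Hence for some $k$ the union $C_{t_0}$ of the classes modulo $p^k$ meeting $A_\delta$ satisfies $\mu(C_{t_0})<\varepsilon$. Next, $\lVert g_t-g_{t_0}\rVert\to0$ as $t\to t_0$, because the coefficients of $g$ tend to zero and $|t^j-t_0^j|\leq|t-t_0|$. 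So there is a ball $D_{t_0}\ni t_0$ with $\sup_x|g_t(x)-g_{t_0}(x)|\leq\delta$ for all $t\in D_{t_0}$. For such $t$, any zero of $g_t$ satisfies $|g_{t_0}(x)|\leq\delta$, and therefore $Z(g_t)\subset A_\delta\subset C_{t_0}$.

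\emph{Step 3: compactness.} Finitely many balls $D_{t_1},\dots,D_{t_m}$ cover $\Z_p$. Every integer $n$ lies in some $D_{t_i}$, which gives
$$\mu_n(Z_n)\leq\max_i\mu_n(C_{t_i}).$$
Each $C_{t_i}$ is a finite union of residue classes, so the hypothesis gives $\mu_n(C_{t_i})\to\mu(C_{t_i})<\varepsilon$. Hence $\limsup_n\mu_n(Z_n)\leq\varepsilon$, and since $\varepsilon$ was arbitrary the limit is $0$.

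I expect the main obstacle to be Step 1. Slices $f_u$ that vanish identically would make Step 2 fail near $u$, and the division must be shown to stay in the Tate algebra and to terminate. Steps 2 and 3 are routine once all slices are non-zero.
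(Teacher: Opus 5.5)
Your proof is correct and follows essentially the same route as the paper. You factor out the finitely many degenerate slices to write $f=P(t)g$ with no slice of $g$ identically zero, and then combine three ingredients: Lemma \ref{fubini} at a slice $t_0$, uniform closeness of nearby slices, and a finite union of residue classes covering the zero set. The only differences are cosmetic: you use a finite subcover of $\Z_p$ by balls $D_{t_i}$ together with sublevel sets $A_\delta$, whereas the paper extracts a convergent subsequence $n_i\to t_0$, argues by contradiction, and uses the positive minimum of $|g(\cdot,t_0)|$ off a clopen cover.
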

	\begin{proof}
		Write $f=\sum_{\alpha} c_{\alpha}(t) x^\alpha$ with $c_\alpha \in L\langle t \rangle,$ and fix $\alpha_0$ with $c_{\alpha_0} \neq 0.$ If $f(\cdot, t_*)$ is identically zero for some $t_* \in \Z_p$, then $c_{\alpha_0}(t_*)=0,$ so by Strassmann's theorem there are only finitely many such $t_*$, denote them by $t_1, \dots, t_m.$ $c_{\alpha_0}$ vanishes at each $t_i$ with finite multiplicity at least $e_i=\min_{\alpha:\,c_\alpha\not\equiv 0}
		\operatorname{ord}_{t=t_i} c_\alpha(t).$\footnote{We only need one non-zero coefficient for each $t \in \Z_p$. Outside of $t_1, \dots, t_m, \  c_{\alpha_0}$ is non-zero. For this set, we divide by the minimum powers among all non-zero coefficients (so one of them will be non-zero at $t_i$) to ensure that the resulting function remains analytic. That's why we don't simply divide by the powers showing up in $c_{\alpha_0}.$} Then, as in the proof of Lemma \ref{strassman}, $f=(t-t_1) f_1,$ with $f_1 \in L\langle x_1, \dots, x_a, t \rangle.$ After doing this for all $i=1, \dots, m,$ we can write
		$$f=P(t)g, \ \ \text{where} \ \ P = \prod_{i=1}^m (t-t_i)^{e_i} \in L[t] \ \backslash \ \lbrace 0 \rbrace, \ \ g \in L \langle x_1, \dots, x_a, t \rangle.$$ By construction, no slice $g(\cdot,t_*)$ is identically zero. Since
		$P(n)=0$ for only finitely many integers $n$, it suffices to prove the
		result for $g$. 
		
		Assume the result is false for $g.$ In particular, there exist $\varepsilon > 0$ and infinitely many $n \in \mathbb{N}$ with $\mu_n \lbrace g(\cdot, n)=0 \rbrace \geq \varepsilon.$ Since $\Z_p$ is compact, we can find a convergent subsequence $(n_i)_i,$ and we denote its limit by $t_0 \in \Z_p.$ The zero set $Z_0(g)= \lbrace x \in \Z_p^a: g(x,t_0)=0 \rbrace$ is compact and $\mu(Z_0(g))=0$ by Lemma \ref{fubini}. Therefore, there exists an open set $Z_0(g) \subset U$ with $\mu(U) < \varepsilon/2.$ Express $U$ as a union of basis opens: $$U=\bigcup_{\alpha } x_\alpha + p^{k_\alpha} \Z_p^a.$$ Since $Z_0(g)$ is compact, we can choose a finite subcover. Moreover, two $p$-adic balls are either nested or disjoint, so we can remove the redundant ones, and get a disjoint open subcover
		$$Z_0(g) \subset \bigsqcup_{j=1}^n x_j + p^{k_j} \Z_p^a=:U_0.$$ Since $\Z_p^a \backslash U_0$ is also compact, the continuous function $|g(x, t_0)|$ attains a minimum $\eta$ there, which is positive by construction since $Z_0(g) \subset U_0.$ $g$ is continuous on the compact $\Z_p^a \times \Z_p,$ hence uniformly continuous. This means that there exists $i_0 \in \mathbb{N}$ such that $|g(x, n_i) - g(x,t_0)| < \eta$ for all $x$ and $i \geq i_0.$ If $g(x,n_i)=0$, this inequality gives $|g(x,t_0)|<\eta$, which
		is impossible outside $U_0$. Thus every zero of $g(\cdot,n_i)$ lies in
		$U_0$ for $i\geq i_0$. 
		
		We conclude $\mu_{n_i} \lbrace g(\cdot, n_i) =0 \rbrace \leq \mu_{n_i}(U_0)$ for all $i \geq i_0.$ The hypothesis about $\mu_n$ in the statement implies $\mu_{n_i}(U_0) \rightarrow \mu(U_0) < \varepsilon/2$ because $U_0$ is a disjoint union of such sets. This is a contradiction because $\mu_{n_i} \lbrace g(\cdot, n_i)=0 \rbrace \geq \varepsilon$ by assumption.
	\end{proof}
	\subsection{Main Iwasawa estimate}\label{main}
	We now write the character condition in Lemma \ref{iweq} as an analytic
	equation to which Lemma \ref{slice} applies. We define a function $f$ using $\delta$ and taking the Tate twist in the target space of the maps into account.
	
	The cyclotomic character $\chi_{\mathrm{cyc}}$ factors through $\Gamma=\Gal(F_\infty/F)$ because $\mu_{p^\infty} \subset F_{\infty},$ and it takes values in $1+p\mathcal{O}_L$ since $\mu_p \subset F.$ If $\chi_1^n u$ occurs in $L_n \otimes_{\Q_p} L,$ the corresponding character in $L_n^\vee(1) \otimes_{\Q_p} L$ is $\chi_{\mathrm{cyc}}\chi_1^{-n} u^{-1},$ and it has the same multiplicity. Fix a $\Z$-basis $m_1, \dots, m_a$ of $B.$ For the sake of clarity in the following proofs, we will use this basis to introduce coordinates on $B_p \simeq \Z_p^a.$ Notice that all the Galois characters in $L_n^\vee(1) \otimes_{\Q_p} L$ can be written as a special case of
	$$\theta_{x,t}=\chi_{\mathrm{cyc}} \chi_1^{-t} \prod_{i=1}^a m_i^{-x_i}, \qquad (x,t) \in \Z_p^a \times \Z_p.$$
	If $\psi$ is a character taking values in $1+ p \mathcal{O}_L,$ its powers are defined, for every $s \in \Z_p,$ via $\psi^s(\gamma)=\exp(s \log \psi(\gamma)).$ Hence every $\theta_{x,t}$ is continuous. We now evaluate the annihilator $\delta \in R$ of $Y_R$ at all these characters:
	$$f(x,t):=\delta(\theta_{x,t}),$$ where by evaluation we mean substituting $T_j = \theta_{x,t}(\gamma_j)-1$ in the power series $\delta.$
	We must check that $f$ is a restricted power series and that the
	substitution does not make $\delta$ vanish identically. 
	\begin{lemma}\label{rank}
		With notation as in Section \ref{class}, the matrix $\left(\log \chi_i(\gamma_j)\right)_{j,i} \in M_{c,2g}(L)$ has rank $c.$
	\end{lemma}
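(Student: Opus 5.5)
We will show that the map $\log\rho:\Gamma\to p\mathcal O_K$ composed with the $\iota_i$ detects the whole $\Z_p$-rank of $\Gamma$. By Section \ref{class}, $\log\circ\rho$ identifies $\Gamma$ with a free $\Z_p$-submodule $\Lambda_\Gamma:=\log(\rho(G_F))\subset p\mathcal O_K\subset K_p$ of rank $c$, and $\log\rho(\gamma_1),\dots,\log\rho(\gamma_c)$ is a $\Z_p$-basis of it. Since $\iota_i$ is a continuous ring homomorphism, $\log\chi_i(\gamma_j)=\log\iota_i(\rho(\gamma_j))=\iota_i(\log\rho(\gamma_j))$. Indeed, $\iota_i$ commutes with the $p$-adic logarithm series on $1+p\mathcal O_K$.

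Consider the $L$-linear map
$$\Phi: K_p\otimes_{\Q_p}L\longrightarrow L^{2g},\qquad x\otimes\ell\mapsto(\ell\,\iota_i(x))_{i=1}^{2g}.$$
This is exactly the splitting isomorphism $K_p\otimes_{\Q_p}L\simeq L^{2g}$ fixed in Section \ref{class}, so it is bijective. The rows of our matrix are $\Phi(\log\rho(\gamma_j)\otimes1)$ for $j=1,\dots,c$. Hence the rank equals $\dim_L$ of the $L$-span of $\Lambda_\Gamma\otimes1$ in $K_p\otimes_{\Q_p}L$.

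It remains to show that $\Z_p$-linearly independent elements $v_1,\dots,v_c$ of $K_p$ stay $L$-linearly independent in $K_p\otimes_{\Q_p}L$. First, $\Z_p$-independence implies $\Q_p$-independence by clearing denominators, which is possible since $p$-powers are units in $\Q_p$. Second, $\Q_p$-independence is preserved under the flat base change $-\otimes_{\Q_p}L$: the injection $\Q_p^c\hookrightarrow K_p$ stays injective after tensoring with the field $L$. Therefore the $c$ rows are $L$-linearly independent, and the rank is exactly $c$. In particular $c\le 2g$.

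The only point needing care is the compatibility $\iota_i\circ\log=\log\circ\iota_i$ and the identification of the rows with images of $\log\rho(\gamma_j)$ under $\Phi$. The first holds because $\log(1+y)=\sum(-1)^{m+1}y^m/m$ converges in the Banach algebra $K_p$ and $\iota_i$ is a continuous $\Q_p$-algebra map. The second holds because $\Phi$ is, by definition, the product of the $\iota_i$ extended $L$-linearly. The rest is linear algebra.
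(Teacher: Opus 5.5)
Your proof is correct and takes essentially the same approach as the paper. You use that the $\Z_p$-basis $\log\rho(\gamma_j)$ is $\Q_p$-independent in $K_p$, that this independence survives the base change $K_p\otimes_{\Q_p}L\simeq L^{2g}$, and that the rows are the images because $\iota_i\circ\log=\log\circ\iota_i$. The only difference is that you prove this last compatibility directly, from continuity of $\iota_i$ on the convergent logarithm series, whereas the paper cites Schikhof after extending $\iota_i$ to $\C_p$.
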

	\begin{proof}
		$\lbrace \log \gamma_1, \dots, \log \gamma_{c} \rbrace$ is a $\Z_p$-basis of $\log \Gamma,$ so it is linearly independent over $\Q_p$ in $K_p$, hence over $L$ after the base change $K_p \otimes_{\Q_p} L \simeq L^{2g},$ where the image of $\log \gamma_j$ is 
		$$(\iota_i(\log \gamma_j))_i=(\log \chi_i(\gamma_j))_i.$$
		This equality follows from the fact that the $p$-adic logarithm commutes with continuous ring homomorphisms by \cite[Theorem~45.13]{Sch84}\footnote{Recall that in Section \ref{class}, $\iota_i: K_p \rightarrow L$ factors through one of the field components of $K_p.$ We can extend $\iota_i$ to a $\Q_p$-automorphism of $\overline{\Q}_p,$ which gives rise to a continuous $\Q_p$-automorphism of $\C_p$, and the cited result applies.}. These are the rows of the matrix, so  $\mathrm{rank}_L\left(\log \chi_i(\gamma_j)\right)_{j,i}=c.$
	\end{proof}
	We now prove the intuitive result that, since $\delta \neq 0,$ the function $f$ is a non-zero restricted series.  
	\begin{lemma}\label{nonzero}
		$f \in L \langle x_1, \dots, x_a, t \rangle$ and it is non-zero.
	\end{lemma}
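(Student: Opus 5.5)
We want two things about $f(x,t)=\delta(\theta_{x,t})$: that it lies in the Tate algebra $L\langle x_1,\dots,x_a,t\rangle$, and that it is not identically zero. The plan is to write $\delta$ as a power series in the $T_j$, control the substituted values $T_j=\theta_{x,t}(\gamma_j)-1$ as analytic functions of $(x,t)$, and then use Lemma \ref{rank} to show that the map $(x,t)\mapsto\theta_{x,t}$ is rich enough that $\delta$ cannot vanish along its image.

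\emph{Step 1: membership in the Tate algebra.} Write $\delta=p^{-s}\delta_0$ with $\delta_0\in\mathcal O_L\llbracket T_1,\dots,T_c\rrbracket$. For each $j$ we have
$$\log\theta_{x,t}(\gamma_j)=\ell_{0,j}-t\,\ell_{1,j}-\sum_i x_i\,\lambda_{i,j},$$
where $\ell_{0,j}=\log\chi_{\mathrm{cyc}}(\gamma_j)$, $\ell_{1,j}=\log\chi_1(\gamma_j)$ and $\lambda_{i,j}=\log m_i(\gamma_j)$. All of these lie in $p\mathcal O_L$, since the characters take values in $1+p\mathcal O_L$ and $p$ is odd. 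So $\theta_{x,t}(\gamma_j)-1=\exp(\cdot)-1$ with argument a linear form $\Lambda_j(x,t)$ whose coefficients lie in $p\mathcal O_L$.

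For $p$ odd, the series $\exp(pz)-1$ is restricted in $z$ with coefficients in $p\mathcal O_L$. Hence each $T_j(x,t)$ is an element of $p\mathcal O_L\langle x,t\rangle$, i.e. of Gauss norm at most $|p|$. Substituting such elements into a power series with $\mathcal O_L$-coefficients converges in the Gauss norm: the monomial $T^\beta$ has norm at most $|p|^{|\beta|}\to 0$, and $\mathcal O_L\langle x,t\rangle$ is complete. So $f\in L\langle x,t\rangle$, and evaluating this substituted series agrees pointwise with $\delta(\theta_{x,t})$.

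\emph{Step 2: non-vanishing.} This is the main obstacle: we must rule out that the substitution kills $\delta$. The idea is to enlarge the domain first. Since $B$ contains $u_i=\chi_1^{-1}\chi_i$, each $\log u_i(\gamma_j)$ is a $\Z$-combination of the $\lambda_{i,j}$. Consider the affine map
$$\Phi:\Z_p^a\times\Z_p\to L^c,\qquad (x,t)\mapsto \bigl(\Lambda_j(x,t)\bigr)_j .$$
Its linear part spans the $L$-span of the rows
$$\bigl(\log\chi_1(\gamma_j)\bigr)_j \quad\text{and}\quad \bigl(\log\chi_1(\gamma_j)-\log\chi_i(\gamma_j)\bigr)_j,\ i=2,\dots,2g.$$
This is the span of the columns $\bigl(\log\chi_i(\gamma_j)\bigr)_j$, $i=1,\dots,2g$, which by Lemma \ref{rank} is all of $L^c$. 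Therefore some $c$ of the variables among $x_1,\dots,x_a,t$ have coefficient vectors that are linearly independent over $L$.

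Now suppose $f\equiv 0$. Set the remaining variables to $0$, and extend the $c$ chosen variables to a small disc, i.e. allow values in $p^N\mathcal O_L$ and not just $\Z_p$. Then the map $y\mapsto\Phi(y)$ is an invertible affine map onto an open neighbourhood of $\Phi(0)$ in $(p\mathcal O_L)^c$. Composing with $\exp-1$, which is a bianalytic bijection $p\mathcal O_L\to p\mathcal O_L$, we get that $\delta$ vanishes on a nonempty open subset of the polydisc $(p\mathcal O_L)^c$.

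The vanishing on $\Z_p$-points does transfer to this larger disc: the restricted series obtained vanishes on $\Z_p^c\supset$ a box, hence it is identically zero as a series by Lemma \ref{fubini}, and so it vanishes on the extended disc as well. Finally, a power series in $\mathcal O_L\llbracket T\rrbracket[1/p]$ that vanishes on an open subset of the open unit polydisc is zero, by the identity theorem, e.g. via Weierstrass preparation or by induction on $c$ with Strassmann (Lemma \ref{strassman}). This contradicts $\delta\neq 0$.

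The delicate points are exactly the transfer just described, from $\Z_p$-points to an $L$-disc via Lemma \ref{fubini}, and checking that the affine reparametrization stays inside the region where both $\exp$ and $\delta$ converge.
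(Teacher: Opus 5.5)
Your proposal is correct. Step 1 is essentially the paper's argument: the paper writes $s_j=c_j\exp(\ell_j)-1$ with $c_j=\chi_{\mathrm{cyc}}(\gamma_j)$ and uses $\lVert d_\beta s^\beta\rVert\leq\lVert\delta\rVert\,|p|^{|\beta|}$.

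For the non-vanishing you take a genuinely different route. The paper's argument is purely formal. It uses the automorphism $\tau:T_j\mapsto c_j(1+T_j)-1$ of $R$ to absorb the constant term $\log\chi_{\mathrm{cyc}}(\gamma_j)$, so that $f=\tau(\delta)(e^{\ell_1}-1,\dots,e^{\ell_c}-1)$ with the $\ell_j$ linear forms without constant term. It then takes an $L$-linear section $z\mapsto(x(z),t(z))$ of the surjection supplied by Lemma \ref{rank} and substitutes it formally, giving $f(x(z),t(z))=\tau(\delta)(e^{z_1}-1,\dots,e^{z_c}-1)$ in $L\llbracket z_1,\dots,z_c\rrbracket$. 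This is non-zero because $T_j\mapsto e^{z_j}-1$ is an isomorphism $L\llbracket T\rrbracket\to L\llbracket z\rrbracket$.

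Your argument is function-theoretic instead. You evaluate at points, use Lemma \ref{rank} to see that $y\mapsto\exp(\Phi(y))-1$ maps a disc of $\mathcal O_L$-points onto a non-empty open subset of $(p\mathcal O_L)^c$, and conclude with an identity theorem for $\mathcal O_L\llbracket T\rrbracket[1/p]$.

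The trade-off is as follows.
\begin{itemize}
\item The formal route never evaluates anything. It therefore needs neither the passage from $\Z_p$-points to $\mathcal O_L$-points nor an identity theorem. You correctly see that this passage is forced on you, since an $L$-invertible affine map sends $\Z_p^c$ to a non-open set when $L\neq\Q_p$.
\item The paper's $\tau$ plays exactly the role of the recentering at the non-origin point $\exp(\Phi(0))-1$ that is hidden inside your identity theorem.
\item Your route gives a concrete reason for non-vanishing: the substituted points sweep out an open piece of the polydisc where $\delta$ converges. The cost is the identity theorem, which you only sketch. It is easily completed: recenter and rescale via $T\mapsto w+\pi^N s$ for a point $w$ of the open set. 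This map is injective from $\mathcal O_L\llbracket T\rrbracket$ into $\mathcal O_L\langle s\rangle$, since it is a translation automorphism followed by scaling. Lemma \ref{fubini} then finishes the argument.
\end{itemize}

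One small point: if you start from $f=0$ in the Tate algebra, which is what the lemma denies, the ``transfer'' step is automatic. A zero series vanishes at every point of $\mathcal O_L^{a+1}$. Lemma \ref{fubini} is only needed if you start from vanishing on $\Z_p^{a+1}$.
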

	\begin{proof}
		We first prove it is restricted.  For $1 \leq j \leq c,$ let
		$$\ell_j(x,t):=-t\log \chi_1(\gamma_j)-\sum_{i=1}^a x_i \log m_i(\gamma_j), \qquad c_j:= \chi_{\mathrm{cyc}}(\gamma_j).$$ The coefficients of $\ell_j$ lie in $p \mathcal{O}_L$ and $c_j \in 1+ p \mathcal{O}_L,$ so 
		$$s_j(x,t):=c_j \exp(\ell_j(x,t))-1$$ is a restricted power series with $\lVert s_j \rVert \leq |p| < 1.$
		Write $\delta \in R$ as $\delta=\sum_\beta d_\beta T^\beta,$ with bounded coefficients. Then
		$$\lVert d_\beta s^\beta \rVert \leq \lVert \delta \rVert |p|^{|\beta|} \to 0,$$ and $f=\sum_\beta  d_\beta \prod_{j=1}^c s_j^{\beta_j}= \sum_\beta d_\beta s^\beta \in L\langle x_1, \dots, x_a,t \rangle.$ 
		
		Let us now prove that $f \neq 0.$ Consider the automorphism 
		$$\tau: R \rightarrow R, \qquad T_j \mapsto c_j(1+T_j)-1,$$ whose inverse replaces $c_j$ by $c_j^{-1}.$ A direct substitution shows $$f(x,t)=\delta(\theta_{x,t})=\tau(\delta)(\exp(\ell_1(x,t))-1, \dots, \exp(\ell_c(x,t))-1).$$
		By Lemma \ref{rank}, $(\log \chi_i(\gamma_j))_{j,i}$ has rank $c.$ Since $\langle \chi_1, m_1, \dots, m_a \rangle_{\Z} = \langle \chi_1, \dots, \chi_{2g} \rangle_{\Z},$ their logarithms span the same $L$-vector space, and the linear map $L^{a+1} \rightarrow L^c$ given by
		$$(x,t) \mapsto (\ell_j(x,t))_j$$ is surjective. Choose a section $L^c \rightarrow L^{a+1}$ given by $z \mapsto (x(z), t(z)).$ Writing everything in terms of power series, we get
		$$f(x(z), t(z))=\tau(\delta)(\exp(z_1)-1, \dots, \exp(z_c)-1).$$ Since $$L\llbracket T_1, \dots, T_c \rrbracket \rightarrow L \llbracket z_1, \dots, z_c \rrbracket, \qquad T_j \mapsto \exp(z_j)-1$$ is an isomorphism (with inverse $z_j \mapsto \log(1+T_j)),$ it sends the non-zero series $\tau(\delta)$ to a non-zero series. Hence $f(x,t) \neq 0.$ 
	\end{proof}
	Finally, we show the main Iwasawa estimate. The role of equidistribution becomes clear with this application; it enables us to work with the full unipotent quotient $U_n.$
	\begin{theorem}\label{estimate}
		We have
		$$\frac{\dim_L \Hom_R\left(Y_R, L_n^\vee(1) \otimes_{\Q_p} L\right)}{d_n} \xrightarrow{n \to \infty} 0.$$
	\end{theorem}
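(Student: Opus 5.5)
The plan is to apply Lemma \ref{iweq} to $W=L_n^\vee(1)\otimes_{\Q_p}L$, then Lemma \ref{nonzero} and Lemma \ref{slice}, with Theorem \ref{conv} supplying the equidistribution hypothesis.

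First, decompose $W$ into characters. Since $L_n\otimes_{\Q_p}L\simeq\bigoplus_{u\in B}(L_n\otimes_{\Q_p}L)[\chi_1^n u]$, dualizing and twisting gives
$$W\simeq\bigoplus_{u\in B}L\left(\chi_{\mathrm{cyc}}\chi_1^{-n}u^{-1}\right)^{m_n(\chi_1^n u)}.$$
The characters $\chi_{\mathrm{cyc}}\chi_1^{-n}u^{-1}$ are pairwise distinct as $u$ varies in $B$, since $u\mapsto u^{-1}$ is injective. They take values in $1+p\mathcal O_L$. Writing $u=\prod_i m_i^{x_i}$ with $x=x(u)\in\Z^a\subset\Z_p^a$ in the chosen coordinates, this character is exactly $\theta_{x(u),n}$. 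Lemma \ref{iweq} then gives
$$\dim_L\Hom_R(Y_R,W)\le b_Y\sum_{\substack{u\in B\\ \delta(\theta_{x(u),n})=0}}m_n(\chi_1^n u)=b_Y\,d_n\,\mu_n\{x\in B_p: f(x,n)=0\}.$$
Here $\mu_n$ is transported to $\Z_p^a$ via the basis $m_1,\dots,m_a$, and $f(x,t)=\delta(\theta_{x,t})$ as defined before Lemma \ref{rank}. One point needs checking: the atoms of $\mu_n$ lie in $B$, so the $\mu_n$-mass of the zero set of $f(\cdot,n)$ in $B_p$ is exactly the sum over $u\in B$ with $f(x(u),n)=0$.

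Dividing by $d_n$, it suffices to show $\mu_n\{x:f(x,n)=0\}\to0$. By Lemma \ref{nonzero}, $f$ is a non-zero element of $L\langle x_1,\dots,x_a,t\rangle$. Lemma \ref{slice} therefore applies once its hypothesis on $(\mu_n)$ is verified. That hypothesis is precisely the second statement of Theorem \ref{conv}: under the coordinate isomorphism $B_p\simeq\Z_p^a$, $B_p^{p^k}$ corresponds to $p^k\Z_p^a$, so cosets $rB_p^{p^k}$ are the balls $x_0+p^k\Z_p^a$. Their $\mu_n$-mass tends to $p^{-ak}=\mu(x_0+p^k\Z_p^a)$. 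Lemma \ref{slice} then gives the convergence to $0$, and the theorem follows because $b_Y$ is fixed.

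The main obstacle is bookkeeping rather than any new estimate. The two things to get exactly right are that the characters of $W$ are $\theta_{x,t}$ specialized at $t=n$, an integer in $\Z_p$, with the inverse $u^{-1}$ matched to the sign convention $m_i^{-x_i}$ in $\theta_{x,t}$, and that evaluating $\delta$ at $\theta_{x,n}$ via $R\to L$ agrees with the point evaluation of the power series $f$. The second holds by construction, since $\theta_{x,n}(\gamma_j)-1=c_j\exp(\ell_j(x,n))-1=s_j(x,n)$, and this series converges because $\lVert s_j\rVert<1$. All analytic difficulty, namely that the zero locus moves with $n$, is already absorbed by Lemma \ref{slice}.
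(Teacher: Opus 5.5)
Your proposal is correct and is essentially the paper's proof: apply Lemma \ref{iweq} to the character decomposition of $L_n^\vee(1)\otimes_{\Q_p}L$ to get the bound $b_Y d_n\,\mu_n\{u\in B_p: f(u,n)=0\}$, then conclude with Lemma \ref{nonzero} and Lemma \ref{slice}. The bookkeeping you add is accurate and is left implicit in the paper: the characters $\theta_{x(u),n}$ are pairwise distinct, the sign convention $m_i^{-x_i}$ matches $u^{-1}$, evaluation of $\delta$ agrees with evaluation of $f$, and Theorem \ref{conv} supplies the ball-convergence hypothesis of Lemma \ref{slice}.
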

	\begin{proof}
		By Lemma \ref{iweq} applied to the character decomposition of $L_n^\vee(1) \otimes_{\Q_p} L$,
		\begin{align*}
			\dim_L \Hom_R(Y_R, L_n^\vee(1) \otimes_{\Q_p} L) &\leq b_Y \sum_{u: f(u, n)=0} m_n(\chi_1^n u) \\
			&= b_Y d_n \cdot \mu_n \lbrace u \in B_p: f(u,n)=0 \rbrace.
		\end{align*}
		By Lemma \ref{nonzero}, $ 0 \neq f \in L\langle x_1, \dots, x_a, t \rangle$, so Lemma \ref{slice} gives $\mu_n \lbrace f(\cdot, n)=0 \rbrace \xrightarrow{n \to \infty} 0,$ and the result follows after dividing both sides by $d_n.$
	\end{proof}
	The connection with cohomology is similar to the one in \cite{CK10} now. The main difference is the way we bound the representations involved (because the dimensions of $L_n$ grow exponentially), and the fact that we work over number fields.
	\section{Cohomology estimates and dimension comparison}\label{cohom}
	We first bound a "Tate-Shafarevich" group using the Iwasawa class module, then control $H^2$ via 
	purity combined with Poitou-Tate duality, and finally apply the Euler characteristic formula together 
	with a trace estimate for complex conjugation to bound $H^1.$ This gives an estimate for 
	$H^1_f(G_{k,T}, U_n).$ For the local target $\prod_{v\mid p}H^1_f(G_v,U_n)$, we compare (Lie 
	algebras of) Hodge filtrations on (Lie algebras of) unipotent groups at each $v\mid p$.
	\subsection{From Iwasawa to global Galois cohomology}
	\begin{lemma}\label{vanish}
		For every finite place $v$ of $k,$
		$$H^0(G_v, L_n)=0 \ \ \ (n \geq 1), \qquad H^0(G_v, L_n^\vee(1))=0 \ \ \ (n \geq 3).$$
		In particular, $H^0(G_{k,T}, L_n)=0$ for all $n \geq 1.$
	\end{lemma}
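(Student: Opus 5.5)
The plan is a weight argument. Since $H^0$ can only shrink under passage to a subgroup, it suffices to show that the invariants vanish already for an open subgroup of $G_v$. Fix a finite place $v$ of $k$ and a place $w$ of $F$ above it. I will first treat $v\notin T'$, and then reduce the remaining places (those above $p$ or of bad reduction) to information available at good primes. The tool is purity: $L_1\simeq V_pJ$ is pure of weight $-1$, and $L_n$ is a quotient of $L_1^{\otimes n}$, so $L_n$ is pure of weight $-n$. Dually, $L_n^\vee(1)$ is pure of weight $n-2$. A nonzero Galois invariant would be an eigenvector of Frobenius with eigenvalue $1$, which has weight $0$. This rules out $L_n$ for every $n\geq1$, and rules out $L_n^\vee(1)$ as soon as $n-2\neq0$, that is, for $n\geq3$.

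To make this uniform over all finite places, including those above $p$ and bad places, I will use the CM structure instead of local Frobenius weights. After restricting to $G_{F_w}$, the space $L_n\otimes_{\Q_p}L$ decomposes into characters $\theta=\prod_j\chi_{i_j}$ of degree $n$, and $L_n^\vee(1)\otimes L$ into characters $\chi_{\mathrm{cyc}}\theta^{-1}$. So the claim reduces to showing that none of these characters is trivial on the decomposition group $D_w\subset\Gamma$, the image of $G_{F_w}$.

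The key step is that $D_w$ is open in $\Gamma$, or at least infinite in a way that detects the degree. For $w\nmid p$, the image of $D_w$ contains $\gamma_{w'}$ with $|\chi_i(\gamma_{w'})|_\C=N(w')^{-1/2}$. At bad places this requires care, but $J$ has CM and hence potentially good reduction everywhere. Over $F$ (after enlarging it if needed), $J$ has good reduction at $w$, so Frobenius at $w$ acts through an element of $\Gamma$ whose image is a Weil number of weight $-1$ in every embedding. Then $\theta(\mathrm{Frob}_w)$ has absolute value $N(w)^{-n/2}\neq1$, and $\chi_{\mathrm{cyc}}\theta^{-1}(\mathrm{Frob}_w)$ has absolute value $N(w)^{(n-2)/2}$, which is $\neq1$ for $n\neq2$. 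For $w\mid p$, I will instead use that $D_w$ contains an open subgroup of the cyclotomic direction: $F_w(\mu_{p^\infty})/F_w$ is infinite, and the restriction to inertia is governed by Hodge–Tate weights. Each $\chi_i$ restricted to inertia at $w$ has Hodge–Tate weight $0$ or $1$, with total degree giving weights in $[0,n]$; this alone does not exclude weight zero. So at $p$ I would rather argue with crystalline Frobenius: $L_n$ is crystalline at $w$ and $D_{\cris}(L_n)^{\varphi=1}$ must vanish because the crystalline Frobenius eigenvalues have weight $-n$ by Katz–Messing (or CM theory). This step at places above $p$ is the main obstacle, and I expect it to be where the argument needs the most care.

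Finally, $H^0(G_{k,T},L_n)\subset H^0(G_v,L_n)$ for any finite $v$, via a chosen embedding $\bar k\hookrightarrow\bar k_v$, which gives the last statement.
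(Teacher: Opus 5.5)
Your proposal is correct and is essentially the paper's proof. At $v\nmid p$ it uses potential good reduction of CM abelian varieties together with Frobenius weights $-n$ and $n-2$ over a finite extension; your detour through the characters $\theta$ on $D_w\subset\Gamma$ is only a coordinate version of this, and the claim that $D_w$ is open in $\Gamma$ is neither needed nor true in general. At $v\mid p$ it uses crystalline Frobenius weights via the injection $W^{G_v}\hookrightarrow D_{\cris,v}(W)^{\varphi=1}$, working with the linear map $\varphi^{f_v}$, for both $L_n$ and $L_n^\vee(1)$. One wording slip: invariants grow rather than shrink when you pass to a subgroup, since $W^{G_v}\subset W^{H}$ for $H\subset G_v$, and that is exactly why vanishing for an open subgroup suffices.
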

	\begin{proof}
		If $v \nmid p,$ potential good reduction of CM abelian varieties \cite[Theorem~6(a)]{ST68} gives a finite extension $E/k_v$ over which $J$ has good reduction, making $V_pJ|_{G_E}$ unramified over $E.$ The geometric Frobenius $\operatorname{Frob}_E$ has eigenvalues of complex absolute value $q_E^{-1/2},$ so the quotient $\left(V_pJ|_{G_E}\right)^{\otimes n} \twoheadrightarrow \left(L_{n}\right)|_{G_E}$ is pure of weight $-n.$ Hence $L_{n}^\vee(1)|_{G_E}$ is pure of weight $n-2.$ If we denote the cardinality of the residue field of $E$ by $q_E,$ it is clear that $q_E^{-n/2} < 1$ for all $n \geq 1$ and $q_E^{(n-2)/2} > 1$ for $n \geq 3.$ Then Frobenius doesn't fix any non-zero vector, and both local vanishing results follow. 
		
		At $v \mid p,$ $J$ has good reduction. Let $q_v=p^{f_v}$ be the cardinality of the residue field of $k_v.$ Crystalline Frobenius $\varphi^{f_v}$ has eigenvalues of complex absolute value $q_v^{(n-2)/2}$ and $q_v^{-n/2}$ on $D_{\cris,v}(L_n^\vee(1))$ and $D_{\cris,v}(L_n)$ respectively. Both local vanishings follow from $W^{G_v} \hookrightarrow D_{\cris,v}(W)^{\varphi=1}$. Finally, for any $v \mid p,$ 
		$$H^0(G_{k,T}, L_n)=L_n^{G_{k,T}} \subset L_n^{G_v} = H^0(G_v, L_n)=0.$$
	\end{proof}
	Given a finite-dimensional $\Q_p$-representation $W$ of $G_{k,T},$ let
	$$\Sh_T^1(W)=\Ker \left(H^1(G_{k,T}, W) \rightarrow \bigoplus_{v \in T} H^1(k_v, W) \right),$$ and define $\Sh_{F,S}^1(W)$ in a similar fashion. At archimedean places, positive degree cohomology vanishes because the local Galois group is finite and its order is invertible in $\Q_p$.
	\begin{prop}\label{asy}
		For $n \geq 3,$ restriction induces an injection
		\begin{equation*}
			\Sh_{F,S}^1(L_n^\vee(1) \otimes_{\Q_p} L) \hookrightarrow \Hom_R(Y_R, L_n^\vee(1) \otimes_{\Q_p} L).
		\end{equation*}
		Therefore,
		$$\frac{\dim_{\Q_p} \Sh_T^1(L_n^\vee(1))}{d_n} \xrightarrow{n \to \infty} 0.$$
	\end{prop}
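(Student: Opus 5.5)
Write $W=L_n^\vee(1)\otimes_{\Q_p}L$. The proof has two parts: construct the restriction map to $F_\infty$ and show it is injective, then pass from $F$ to $k$.

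\textbf{Constructing the map.} The $G_F$-action on $W$ factors through $\Gamma$, so $G_{F_\infty}$ acts trivially on $W$. Consider inflation--restriction for $1\to G_{F_\infty,S}\to G_{F,S}\to\Gamma\to 1$:
$$0\to H^1(\Gamma,W^{G_{F_\infty}})\to H^1(G_{F,S},W)\to H^1(G_{F_\infty,S},W)^{\Gamma}.$$
The target is
$$H^1(G_{F_\infty,S},W)^\Gamma=\Hom_{\Gamma}(G_{F_\infty,S}^{ab},W).$$

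Now take a class in $\Sh^1_{F,S}(W)$. Its restriction is a $\Gamma$-equivariant homomorphism $\phi$. Because the class is locally trivial at every place of $S$, $\phi$ vanishes on the decomposition groups at places of $F_\infty$ above $S$. Those decomposition groups contain the inertia groups. Since $\phi$ is unramified outside $S$ by construction, $\phi$ factors through $\Gal(M'/F_\infty)$, where $M'$ is the maximal abelian pro-$p$ extension that is unramified everywhere and in which all places above $S$ split completely. The image of $\phi$ is a compact $\Z_p$-submodule of a $\Q_p$-vector space, so it is pro-$p$; hence $M'=M$ and $\phi$ factors through $Y$. Extending scalars gives an $R$-linear map $Y_R\to W$, because $\Gamma$-equivariance is the same as $\Lambda$-linearity and this extends $L$-linearly.

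\textbf{Injectivity.} The kernel is contained in $H^1(\Gamma,W)$, with $W^{G_{F_\infty}}=W$. This is the main obstacle. Since $\Gamma\simeq\Z_p^c$ and $W$ is a sum of characters $\theta$, we have $H^1(\Gamma,L(\theta))=0$ whenever $\theta\neq1$: pick $\gamma$ with $\theta(\gamma)\neq1$, and the usual commuting-element argument kills cocycles. So it suffices to show $W^{\Gamma}=0$, i.e.\ that the trivial character does not occur in $W$. This follows from weights: by Lemma~\ref{vanish}, $H^0(G_w,L_n^\vee(1))=0$ for $n\ge3$, and the same holds after base change to $F$ and $L$, since the Frobenius weight argument works over any finite extension. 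So the trivial character is absent and the kernel vanishes.

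I would double-check one further point. Even for nontrivial $\theta$, classes in $H^1(\Gamma,L(\theta))$ are nonzero only when $\theta=1$ on all of $\Gamma$. The vanishing $H^1(\Gamma,L(\theta))=0$ for $\theta\ne1$ holds for $\Z_p^c$ by the Hochschild--Serre/Koszul computation, so this is fine.

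\textbf{Descent to $k$.} Restriction from $G_{k,T}$ to $G_{F,S}$ has kernel $H^1(\Gal(F/k),W^{G_F})$. This vanishes because $\Gal(F/k)$ is finite and we are in characteristic $0$. Local triviality is preserved under restriction, so
$$\Sh^1_T(L_n^\vee(1))\otimes L\hookrightarrow \Sh^1_{F,S}(W).$$
Moreover $\dim_{\Q_p}\Sh^1_T(L_n^\vee(1))=\dim_L\bigl(\Sh^1_T(L_n^\vee(1))\otimes L\bigr)$ by flat base change of cohomology. Combining the injection with Theorem~\ref{estimate} gives the limit $0$.
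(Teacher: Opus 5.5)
\textbf{Where the proposal matches the paper.} Your construction of the map and the injectivity argument are essentially the paper's. Both use inflation--restriction along $1\to\Gal(F_S/F_\infty)\to G_{F,S}\to\Gamma\to1$, both show that $\phi$ vanishes on the decomposition groups above $S$, both factor $\phi$ through $Y=\Gal(M/F_\infty)$, and both use $H^1(\Gamma,W)=0$. The paper kills $H^1(\Gamma,W)$ in one stroke: it takes the image $\gamma_0$ of a Frobenius at some $w\notin S$, and $\gamma_0-1$ is invertible on all of $W$ by weights. Your character-by-character version reduces to $W^{\Gamma}=0$, which rests on the same weight input, so that part is fine.

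\textbf{The gap is in the descent to $k$.} In the paper's setup, $S$ is the set of places of $F$ above $T'$. Here $T'$ is $T$ together with the places of $k$ ramified in $F/k$, and these need not lie in $T$. This has two consequences. First, $G_{F,S}=\Gal(k_{T'}/F)$ is an open subgroup of $G_{k,T'}$, not of $G_{k,T}$. So ``restriction from $G_{k,T}$ to $G_{F,S}$ with kernel $H^1(\Gal(F/k),W^{G_F})$'' has to be replaced by inflation to $G_{k,T'}$ (injective on $H^1$) followed by restriction (injective since $\mathrm{Cor}\circ\mathrm{Res}=[F:k]$). Second, and more seriously, a class in $\Sh^1_T(L_n^\vee(1))$ is only assumed locally trivial at places of $T$. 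Your sentence ``local triviality is preserved under restriction'' therefore says nothing at places of $S$ lying over $T'\setminus T$. As written, you have not shown that the restricted class lands in $\Sh^1_{F,S}(W)$, so the injection you feed into Theorem~\ref{estimate} is not established.

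\textbf{The missing step.} The paper supplies it as follows. Let $v\in T'\setminus T$, so $v\nmid p$ and $X$ has good reduction at $v$. A class coming from $G_{k,T}$ is unramified at $v$, so its localization lies in $H^1_{\mathrm{ur}}(k_v,L_n^\vee(1))=L_n^\vee(1)/(\mathrm{Frob}_v-1)L_n^\vee(1)$. This vanishes for $n\geq3$, because $\mathrm{Frob}_v$ has eigenvalues of absolute value $q_v^{(n-2)/2}\neq1$. That gives $\Sh^1_T\hookrightarrow\Sh^1_{T'}\hookrightarrow\Sh^1_{F,S}$, after which tensoring with $L$ finishes the argument as you describe.

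Alternatively, if you arrange $F\subset k_T$ so that $T'=T$, your argument goes through verbatim. That choice must be made explicitly, since $S$, and hence $M$ and $Y$, depend on it.
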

	\begin{proof}
		Fix $n \geq 3,$ and let $H=\Ker(G_{F,S} \rightarrow \Gamma)=\Gal(F_S/F_\infty).$ It acts trivially on $L_n^\vee(1)$ because it fixes both $L_n$ and $\Q_p(1).$ We first claim that $H^1(\Gamma, L_n^\vee(1) \otimes_{\Q_p} L)=0.$ To see this, choose a place $w \notin S$ and let $\gamma_0 \in \Gamma$ be the image of geometric Frobenius via $G_{F_w} \rightarrow G_F \rightarrow G_{F,S} \rightarrow \Gamma.$ Its eigenvalues on $L_n^\vee(1)$ have absolute value $N(w)^{(n-2)/2} > 1,$ so $\gamma_0-1$ is invertible as an endomorphism of $L_n^\vee(1) \otimes_{\Q_p} L.$ Given $[c] \in H^1(\Gamma, L_n^\vee(1) \otimes_{\Q_p} L),$ applying the cocycle condition to $c(\gamma \gamma_0)=c(\gamma_0 \gamma)$ (since $\Gamma$ is abelian) and comparing them gives
		$c(\gamma)=(\gamma-1) \cdot \left((\gamma_0-1)^{-1} c(\gamma_0)\right),$ so $c$ is a coboundary and $H^1(\Gamma, L_n^\vee(1) \otimes_{\Q_p} L)=0.$
		
		Inflation-restriction applied to $1 \rightarrow H \rightarrow G_{F,S} \rightarrow \Gamma \rightarrow 1$ then gives an injection
		\begin{equation}\label{inj}
			H^1(G_{F,S}, L_n^\vee(1) \otimes_{\Q_p} L) \hookrightarrow H^1(H, L_n^\vee(1) \otimes_{\Q_p} L)^\Gamma = \Hom_{\mathrm{cts}}(H, L_n^\vee(1) \otimes_{\Q_p} L)^\Gamma.
		\end{equation}
		Here the $\Gamma$ superscript means $\Gamma$-equivariant. For the equality, we are using that $H$ acts trivially on $L_n^\vee(1) \otimes_{\Q_p} L,$ so the cocycle condition is equivalent to the maps being group homomorphisms. Given $[c] \in \Sh_{F,S}^1(L_n^\vee(1) \otimes_{\Q_p} L),$ consider its restriction $\phi=c|_H: H \rightarrow L_n^\vee(1) \otimes_{\Q_p} L.$ By construction, $\loc_v([c])=0$ for all $v \in S,$ so there exists $a_v \in L_n^\vee(1) \otimes_{\Q_p} L$ such that $c(\sigma)=(\sigma-1)a_v$ for all $\sigma \in D_v=\mathrm{Im}(G_{F_v} \rightarrow G_{F,S}).$ Therefore, $\phi$ vanishes on $D_v \cap H$ by trivial action of $H.$ By $\Gamma$-equivariance, $\phi$ also vanishes in all of its conjugate subgroups. Therefore, we claim that the extension $F_S^{\Ker \phi}/F_\infty,$ which is unramified outside of $S$ (because $F_S^{\Ker \phi} \subset F_S$), is completely split above $S.$ Indeed, if $w$ is a place of $F_\infty$ above $ v \in S,$ let $\widetilde{w}$ be a place on $F_S$ above it. Notice that, up to conjugacy, $D_{\widetilde{w}}(F_S/F_\infty) = D_{v} \cap H \subset \Ker \ \phi.$ Write $w^\circ=\widetilde{w}|_{F_S^{\Ker \phi}}.$ Under the projection $H \rightarrow H/{\Ker \  \phi},$ $$D_{w^\circ}\left(F_S^{\Ker \ \phi}/F_\infty \right)=\mathrm{Im}(D_{\widetilde{w}} \rightarrow H/\Ker \ \phi)=1,$$ as we wanted to show. Moreover, $\phi(H)$ is compact and lies in a $\Z_p$-lattice, so it is an abelian pro-$p$ group. Hence $F_S^{\Ker  \phi} \subset M.$ Recall that $M/F_\infty$ is the maximal abelian pro-$p$ extension everywhere unramified and totally split above $S;$ recall also our notation $Y=\Gal(M/F_\infty).$ The restriction map
		$$r: H=\Gal(F_S/F_\infty) \rightarrow Y = \Gal(M/F_\infty)$$ satisfies $$\Ker \  r = \Gal(F_S/M) \subset \Ker \ \phi = \Gal(F_S/F_S^{\ker \phi}).$$  Consequently, $\phi$ factors through $Y,$ and we write $\phi = \bar{\phi} \circ r,$ with $\bar{\phi}: Y \rightarrow L_n^\vee(1) \otimes_{\Q_p} L.$ Notice that $\bar{\phi}$ is continuous and $\Gamma$-equivariant by (\ref{inj}), hence $\Lambda=\Z_p\llbracket \Gamma \rrbracket$-linear (because it is a group homomorphism as well). The target is an $R$-module because $R \simeq \Lambda \otimes_{\Z_p} L$, so $\bar{\phi}$ extends uniquely to an $R$-linear map $\bar{\phi}_R: Y_R \rightarrow L_n^\vee(1) \otimes_{\Q_p} L.$ If $\bar{\phi}_R=0,$ $\bar{\phi}=0,$ so $c|_H=\bar{\phi} \circ r = 0.$ Since (\ref{inj}) is injective, this implies $[c]=0.$ Hence $[c] \mapsto \bar{\phi}_R$ defines an $L$-linear injection
		\begin{equation*}
			\Sh_{F,S}^1(L_n^\vee(1) \otimes_{\Q_p} L) \hookrightarrow \Hom_R(Y_R, L_n^\vee(1) \otimes_{\Q_p} L).
		\end{equation*}
		Now Theorem \ref{estimate} gives
		$$\dim_L \Sh_{F,S}^1(L_n^\vee(1) \otimes_{\Q_p}L)=o(d_n).$$
		Finally, we descend this estimate to $k.$ 
		
		Fix $n \geq 3$. Recall that $T'$ is the union of $T$ and the places ramified in $F/k$. At $v \in T'\backslash T$ the curve has good reduction and a class in $H^1(G_{k,T},L_n^\vee(1))$ is unramified at $v,$ so its localization lies in $H^1_{\mathrm{ur}}(k_v, L_n^\vee(1)) = L_n^\vee(1)/(\mathrm{Frob}_v -1)L_n^\vee(1),$ which is zero because $\mathrm{Frob}_v$ has no eigenvalue $1$ on $L_n^\vee(1)$ for $n \geq 3.$ Hence $\Sh_T^1(L_n^\vee(1)) \hookrightarrow \Sh_{T'}^1(L_n^\vee(1)).$ Moreover, the restriction map $\mathrm{Res}: H^1(G_{k,T'}, L_n^\vee(1)) \rightarrow H^1(G_{F,S}, L_n^\vee(1))$ is injective, because $F_S=k_{T'}$ gives $G_{F,S}=\Gal(k_{T'}/F) \subset G_{k,T'},$ hence $\mathrm{Cor} \circ \mathrm{Res} = [F:k] \cdot \mathrm{Id}$ (see e.g. \cite[VIII, \S~2.]{Ser79}), which is invertible on a $\Q_p$-vector space. So we have injections
		$$\Sh_T^1(L_n^\vee(1)) \hookrightarrow \Sh_{T'}^1(L_n^\vee(1)) \hookrightarrow \Sh_{F,S}^1(L_n^\vee(1)).$$ Since $L/\Q_p$ is finite, we can view cocycles in coordinates, and it follows that $- \otimes_{\Q_p} L$ commutes with local and global $H^1.$ It also commutes with kernels by exactness, giving
		$$\Sh_T^1(L_n^\vee(1)) \otimes_{\Q_p} L \simeq \Sh_T^1(L_n^\vee(1) \otimes_{\Q_p} L) \hookrightarrow \Sh_{F,S}^1(L_n^\vee(1) \otimes_{\Q_p} L).$$ We conclude that
		$$\dim_{\Q_p} \Sh_T^1(L_n^\vee(1)) \leq \dim_L \Sh_{F,S}^1(L_n^\vee(1) \otimes_{\Q_p} L) = o(d_n).$$
	\end{proof}
	\subsection{Dimension comparison}
	Let $h^i(G_{k,T}, L_n):= \dim_{\Q_p} H^i(G_{k,T}, L_n).$
	\begin{prop}\label{odn}
		As $n \to \infty$,
		$$h^2(G_{k,T}, L_n)=o(d_n), \qquad h^1(G_{k,T}, L_n) = \frac{[k:\Q]}{2} d_n + o(d_n).$$
	\end{prop}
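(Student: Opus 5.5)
For the $h^2$ estimate I will use Poitou--Tate duality for $G_{k,T}$. The nine-term sequence gives an exact sequence
$$H^2(G_{k,T},L_n)\rightarrow\bigoplus_{v\in T}H^2(k_v,L_n)\rightarrow H^0(G_{k,T},L_n^\vee(1))^\vee\rightarrow 0,$$
and its kernel is $\Sh^2_T(L_n)\simeq \Sh^1_T(L_n^\vee(1))^\vee$. This yields
$$h^2(G_{k,T},L_n)\leq \dim_{\Q_p}\Sh^1_T(L_n^\vee(1))+\sum_{v\in T\ \text{finite}}h^2(k_v,L_n).$$
Archimedean terms vanish, as noted before Proposition \ref{asy}. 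For finite $v$, local Tate duality gives $h^2(k_v,L_n)=h^0(k_v,L_n^\vee(1))$, and this is $0$ for $n\geq 3$ by Lemma \ref{vanish}. The first term is $o(d_n)$ by Proposition \ref{asy}, so $h^2=o(d_n)$.

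For $h^1$ I will use Tate's global Euler characteristic formula:
$$h^0-h^1+h^2=\sum_{v\mid\infty}h^0(k_v,L_n)-[k:\Q]d_n.$$
Here $h^0(G_{k,T},L_n)=0$ by Lemma \ref{vanish} and $h^2=o(d_n)$ from the first step. It therefore remains to show that
$$\sum_{v\mid\infty}\dim L_n^{G_v}=\tfrac{[k:\Q]}{2}d_n+o(d_n).$$
At a complex place $G_v$ is trivial, so the contribution is $d_n=\tfrac{2}{2}d_n$, exactly matching the local degree $2$. At a real place, with complex conjugation $c_v$, we have $\dim L_n^{c_v}=\tfrac12\left(d_n+\Tr(c_v\mid L_n)\right)$. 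Summing over all archimedean places gives $\tfrac{[k:\Q]}{2}d_n+\tfrac12\sum_{v\ \text{real}}\Tr(c_v\mid L_n)$.

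The remaining point is to show $\Tr(c_v\mid L_{n,\C})=o(d_n)$, and I will do this with Lemma \ref{trace}. The action of $c_v$ on $L_1=V_pJ$ is an involution $h$. Since the Weil pairing takes values in $\Q_p(1)$, on which $c_v$ acts by $-1$, $h$ is anti-symplectic rather than symplectic. Its eigenvalues are $\pm1$, each with multiplicity $g$: comparison with Betti cohomology shows $c_v$ swaps $H^{1,0}$ and $H^{0,1}$, so $\Tr(h\mid V)=0$.

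The action on $\mathrm{gr}(U)$ is still well-defined because $h(\omega)=-\omega$ and $(\omega)=(-\omega)$. To compute traces I will choose an anti-symplectic involution and pass to a symplectic one, or apply the same character computation directly. The argument behind \eqref{inversion} only uses the necklace/Witt formula for the free Lie algebra together with the quotient by a one-dimensional relation in degree $2$ on which $h$ acts by a scalar. So I expect
$$\Tr(h\mid L_n)=\tfrac1n\sum_{e\mid n}\mu_{\mathrm{Mob}}(e)\,\lambda_e^{n/e}\text{-type terms}$$
with $s_1=\Tr(h\mid V)=0<2g$. The $e=1$ term is then bounded by $\rho^n$ with $\rho<\lambda$, and the $e\geq2$ terms are $O(\lambda^{n/2})$, giving $o(d_n)$ exactly as in Lemma \ref{trace}.

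Adapting the symplectic character formula to this anti-symplectic involution is the step I expect to be the main obstacle. If the formula does not transfer directly, I would fall back on the explicit decomposition $\mathrm{gr}(U)=\mathbb{L}(V)/(\omega)$. This gives $\mathrm{ch}\,\mathbb{L}_n(V)$ via Witt's formula, and the generating identity
$$\prod_n\det(1-h t^n\mid L_n)^{-1}=\left(1-\Tr(h\mid V)t+\Tr(h\mid\omega)t^2\right)^{-1},$$
which is the Labute-type formula for one-relator Lie algebras. With $\Tr(h\mid V)=0$ and $\Tr(h\mid\omega)=-1$, the right side is $(1+t^2)^{-1}$, whose roots have modulus $1<\lambda$. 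The same Möbius inversion then gives $|\Tr(c_v\mid L_n)|=O(\lambda^{n/2})=o(d_n)$.

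Combining the archimedean estimate with $h^0=0$ and $h^2=o(d_n)$ in the Euler characteristic formula gives $h^1(G_{k,T},L_n)=\tfrac{[k:\Q]}{2}d_n+o(d_n)$.
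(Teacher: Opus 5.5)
Your proof is correct. For $h^2$ (Poitou--Tate, local duality with Lemma~\ref{vanish} for $n\geq3$, then Proposition~\ref{asy}) and for the archimedean Euler characteristic bookkeeping, it is the paper's argument. The only divergence is the bound $\Tr(c_v\mid L_n)=o(d_n)$, which you flagged as the main obstacle.

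The paper makes your ``pass to a symplectic one'' concrete with a scalar twist. Since $C_v^T\Omega C_v=-\Omega$ and $C_v^2=I$, the matrix $h_{c_v}=-iC_v$ lies in $\mathrm{Sp}_{2g}(\C)$ and satisfies $h_{c_v}^4=I$. It is automatically $\neq\pm I$, since otherwise $C_v^2=-I$. Because the action on $L_n$ is multilinear of degree $n$, $\Tr(C_v\mid L_n)=i^n\Tr(h_{c_v}\mid L_{n,\C})$, and Lemma~\ref{trace} applies verbatim. This is the same renormalization as the factor $z^{-1}$ in $h_\eta$ in the proof of Theorem~\ref{conv}. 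It needs no Hodge-theoretic input; your eigenvalue count is automatic anyway, since the $\pm1$-eigenspaces of an anti-symplectic involution are isotropic.

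Your fallback also works. The identity $\prod_n\det(1-ht^n\mid L_n)=1-\Tr(h\mid V)t+\Tr(h\mid\omega)t^2$ is the equivariant Euler characteristic of the Chevalley--Eilenberg complex of $\mathbb{L}(V)/(\omega)$. By inertness of $\omega$, that complex has homology $\Q_p$, $V$, $\langle\omega\rangle$ in degrees $0,1,2$. It is the identity underlying \eqref{inversion}, and it handles any automorphism scaling $\omega$ directly, at the cost of citing Labute's inertness result rather than a ready-made character formula.

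Two small fixes in the fallback:
\begin{itemize}
\item With $\Tr(h\mid V)=0$ and $\Tr(h\mid\omega)=-1$, the right side is $1-t^2$, not $1+t^2$. This is harmless, since the roots still have modulus $1$.
\item The M\"obius inversion uses the identity for every power $h^e$, not just $h$. For even $e$ one has $h^e=I$, with roots $\lambda^{\pm1}$, and these are exactly the terms that produce your $O(\lambda^{n/2})$ bound.
\end{itemize}
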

	\begin{proof}
		For $n \geq 3$ and finite $v \in T,$ local Tate duality and Lemma \ref{vanish} give
		$$H^2(G_v, L_n) \simeq H^0(G_v, L_n^\vee(1))^\vee=0.$$
		At archimedean places, it is also zero. It follows that $\Ker \left( H^2(G_{k,T}, L_n) \rightarrow \bigoplus_{v \in T} H^2(G_v, L_n) \right)=H^2(G_{k,T}, L_n).$ As in the proof of \cite[Theorem~0.1]{CK10}, Poitou-Tate duality \cite[Theorem 4.10]{Mil86} consequently gives
		$$H^2(G_{k,T}, L_n) \simeq \left(\Sh_T^1(L_n^\vee(1))\right)^\vee.$$ By Proposition \ref{asy}, it follows that $h^2(G_{k,T}, L_n)=o(d_n).$

		Let $r_1$ and $r_2$ be the numbers of real and complex places of $k,$ so that $r_1+2r_2=[k:\Q].$ For each real place $v,$ let $c_v$ be its complex conjugation. Choose a basis of $V_pJ$ where the Weil pairing's matrix form is $$\Omega=\begin{pmatrix}
			0 & I_{g} \\
			-I_{g} & 0
		\end{pmatrix},$$ and let $C_v$ be the matrix of $c_v$ in that basis. As before, we write $(C_v \ | \ L_n)$ for the induced matrix. Lemma \ref{vanish} combined with the global Euler characteristic formula \cite[Lemma~2]{Jan89} reads
		\begin{align*}
			h^1(G_{k,T}, L_n) &= h^2(G_{k,T}, L_n) + r_2 d_n+ \sum_{v \ \text{real}} \dim L_n^{c_v=-1} \\
			&=h^2(G_{k,T}, L_n)+ \frac{[k:\Q]}{2}d_n - \frac{1}{2} \sum_{v \ \text{real}} \Tr\left(C_v \ | \ L_n \right).
		\end{align*}
		We have $C_v^T \Omega C_v = -\Omega$ and $C_v^2=I,$ so after extending scalars to $\C,$ $h_{c_v}=-iC_v \in \mathrm{Sp}_{2g}(\C)$ with $h_{c_v} \neq \pm I, \ h_{c_v}^4=I,$ and Lemma \ref{trace} gives
		$$\Tr(C_v \ | \ L_n)=i^n\Tr(h_{c_v} \ | \ L_{n,\C})=o(d_n).$$ We conclude $$h^1(G_{k,T}, L_n)=\frac{[k:\Q]}{2}d_n + o(d_n).$$
	\end{proof}
	\begin{theorem}\label{thm}
		As $n\to\infty$,
		\begin{align*}
			\dim_{\Q_p}H^1_f(G_{k,T},U_n)
			&\leq\frac{[k:\Q]}{2}D_n+o(D_n),\\
			\dim_{\Q_p}\prod_{v\mid p}
			\Res_{k_v/\Q_p} U_{n,v}^{\dR}/F^0
			&=[k:\Q]D_n-o(D_n).
		\end{align*}
		In particular, for all $n\gg0$,
		$$\dim_{\Q_p}H^1_f(G_{k,T},U_n)
		<
		\dim_{\Q_p}\prod_{v\mid p}
		\Res_{k_v/\Q_p} U_{n,v}^{\dR}/F^0,
		\qquad
		\limsup_{n\to\infty}
		\frac{\dim_{\Q_p}H^1_f(G_{k,T},U_n)}
		{\dim_{\Q_p}\prod_{v\mid p}
			\Res_{k_v/\Q_p} U_{n,v}^{\dR}/F^0}
		\leq\frac12.$$
	\end{theorem}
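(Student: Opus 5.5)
The plan is to handle the global and local sides separately, summing graded pieces via the central extensions $1\to L_n\to U_n\to U_{n-1}\to 1$, and then compare the two asymptotics.

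\textbf{Global upper bound.} I would induct on $n$ using the exact sequence of pointed sets (or schemes)
\[
H^1(G_{k,T},L_n)\to H^1(G_{k,T},U_n)\to H^1(G_{k,T},U_{n-1}),
\]
where the fibres of the second map are orbits of the first group acting on the set. This gives $\dim H^1_f(G_{k,T},U_n)\leq \dim H^1(G_{k,T},U_n)\leq \sum_{j\leq n} h^1(G_{k,T},L_j)$. The map into $H^1(G_{k,T},U_{n-1})$ need not be surjective, but that only helps: the dimension of the image is bounded by that of the target, and each fibre has dimension at most $h^1(G_{k,T},L_n)$. I would make this precise with Kim's representability results \cite{Kim05}. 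Proposition \ref{odn} then gives $h^1(G_{k,T},L_j)=\frac{[k:\Q]}{2}d_j+o(d_j)$. Summing over $j\le n$ and using that $d_j$ grows exponentially, so that $\sum_{j\le n}o(d_j)=o(D_n)$ (any fixed finite initial segment is $O(1)=o(D_n)$), I get $\frac{[k:\Q]}{2}D_n+o(D_n)$.

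\textbf{Local lower bound.} Here the dimension is computed directly. $\Res_{k_v/\Q_p}U^{\dR}_{n,v}/F^0$ has $\Q_p$-dimension $[k_v:\Q_p]\dim_{k_v}(U^{\dR}_{n,v}/F^0)$, and $\sum_{v\mid p}[k_v:\Q_p]=[k:\Q]$. So it suffices to show $\dim F^0U^{\dR}_{n,v}=o(D_n)$. Passing to Lie algebras and to the associated graded for the lower central series, $\dim F^0\mathrm{Lie}\,U^{\dR}_n=\sum_{j\le n}\dim F^0 L_j^{\dR}$. The Hodge filtration on $L_1^{\dR}=H^1_{\dR}(X)^\vee$ has $F^0$ of dimension $g$ and $F^{1}=0$, in the convention where $F^{-1}$ is everything. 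Since $L_j^{\dR}$ is a quotient of $(L_1^{\dR})^{\otimes j}$, which is compatible with the filtration (strictly, by strictness of morphisms of filtered modules), $F^0L_j^{\dR}$ is spanned by brackets of $j$ elements all lying in $F^0L_1$. These span the image of the free Lie algebra on a $g$-dimensional space, which is isotropic, so $\omega$ plays no role. Its degree-$j$ part has dimension roughly $g^j/j$, and $g<\lambda=g+\sqrt{g^2-1}$. So $\dim F^0L_j^{\dR}=O(g^j)=o(d_j)$ because $d_j\sim\lambda^j/j$, and summing gives $o(D_n)$. This yields $[k:\Q]D_n-o(D_n)$.

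\textbf{Conclusion.} Dividing, the ratio is at most $\bigl(\frac{[k:\Q]}{2}D_n+o(D_n)\bigr)/\bigl([k:\Q]D_n-o(D_n)\bigr)\to\frac12$. This gives the limsup bound, and in particular strict inequality for $n\gg0$.

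The main obstacle I expect is making the local filtration argument rigorous. One has to justify that the dimension of $U^{\dR}_n/F^0$ equals $D_n-\sum_j\dim F^0L^{\dR}_j$ using Wojtkowiak's filtration, which requires strict compatibility of the Hodge filtration with the lower central series. I would try to cite Kim09's description, where $F^0U^{\dR}_n$ is the subgroup with Lie algebra generated by $F^0$ of the abelianization, and bound its dimension by free Lie algebra counts on $g$ generators. A secondary subtlety is that the global fibration estimate also needs $H^0(G_{k,T},L_n)=0$ (Lemma \ref{vanish}), so that the action is well-behaved, but only the upper bound is needed there.
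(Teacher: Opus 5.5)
Your proposal follows the same route as the paper. On the global side, the paper quotes Betts's closed immersion $H^1(G_{k,T},U_n)\hookrightarrow\prod_{j\le n}H^1(G_{k,T},L_j)$ in place of your fibre-dimension count along $1\to L_j\to U_j\to U_{j-1}\to1$; either suffices with Proposition~\ref{odn}. On the local side, it uses the same count $\dim F^0\le\sum_{j\le n}g^j=o(D_n)$, from $F^1=0$ on $H^1_{\dR}(X_{k_v})^\vee$ and $g<\lambda$.

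The one step to repair is your parenthetical justification of strictness. A filtered map need not be strict. For the filtered surjection $(L_1)^{\otimes j}\twoheadrightarrow L_j$, with $L_j$ carrying the filtration induced from $\mathrm{Lie}\,U_n$, you only get that the image of $(F^0L_1)^{\otimes j}$ lies in $F^0L_j$. That is the wrong direction for the upper bound on $\dim F^0L_j$ you need.

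The paper supplies strictness from $p$-adic Hodge theory rather than from the de Rham side directly:
\begin{itemize}
\item Olsson's comparison identifies $U^{\dR}_{n,v}$ and $F^0U^{\dR}_{n,v}$ with $D_{\dR,v}(U_n)$ and $F^0D_{\dR,v}(U_n)$.
\item Betts's lemma passes from these groups to $F^0D_{\dR,v}(\mathrm{Lie}\,U_n)$.
\item Exactness of $F^0D_{\dR,v}$ on de Rham representations \cite[Proposition~6.3.3]{BC09} is applied to $V_pJ^{\otimes j}\twoheadrightarrow L_j$ and to $0\to L_j\to\mathrm{Lie}\,U_j\to\mathrm{Lie}\,U_{j-1}\to0$.
\end{itemize}
These give exactly the two strictness statements you flagged as the main obstacle.
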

	\begin{proof}
		By \cite[Corollary~2.2.7]{Bet23} and the proof of \cite[Proposition~3.2.4]{Bet23} respectively, there are closed immersions of schemes
		$$H^1(G_{k,T}, U_n) \hookrightarrow \prod_{j=1}^n H^1(G_{k,T}, L_j), \qquad H^1_f(G_{k,T}, U_n) \hookrightarrow H^1(G_{k,T}, U_n).$$ Proposition \ref{odn} hence gives the global estimate
		$$\dim_{\Q_p} H^1_f(G_{k,T}, U_n) \leq \sum_{j=1}^n h^1(G_{k,T}, L_j) = \frac{[k:\Q]}{2} D_n + o(D_n). $$
		On the local side, fix a place $v\mid p$. In what follows, we work with de Rham dimensions over $k_v$, and $D_{\dR,v}$ denotes the de Rham functor for $G_v$. The group schemes $U_{n,v}^{\dR}$ and $F^0U_{n,v}^{\dR}$ are smooth algebraic groups in characteristic zero, hence have the same dimensions as their Lie algebras. Olsson's comparison theorem \cite[Theorem~1.8]{Ols11} in the form of \cite[Sections~1.1--1.2]{Ols16} gives $D_{\dR,v}(U_n)\simeq U_{n,v}^{\dR}$ and shows that $U_n$ is crystalline at $v$. By \cite[Definition-Lemma~4.14]{Bet24} applied to $B=B_{\cris}$, $\mathrm{Lie}\ U_n$ is also crystalline, hence de Rham. The Hodge filtration on
		$$D_{\dR,v}(V_pJ)\simeq H^1_{\dR}(X_{k_v})^\vee$$
		satisfies $\dim_{k_v}F^0D_{\dR,v}(V_pJ)=g$ and $F^1D_{\dR,v}(V_pJ)=0$, so \cite[Proposition~6.3.3]{BC09} gives a quotient map
		$$\left(F^0D_{\dR,v}(V_pJ)\right)^{\otimes j}
		=F^0D_{\dR,v}(V_pJ^{\otimes j})\twoheadrightarrow F^0D_{\dR,v}(L_j).$$
		Hence $\dim_{k_v}F^0D_{\dR,v}(L_j)\leq g^j$. Since $d_n\sim\lambda^n/n$ \cite[p.~123]{Kim09} and $\lambda>g$,
		$$\sum_{j=1}^n\dim_{k_v}F^0D_{\dR,v}(L_j)\leq\sum_{j=1}^ng^j=O(g^n)=o(D_n).$$
		\cite[Corollary~8.13]{Ols16} identifies $F^0D_{\dR,v}(U_n)\simeq F^0U_{n,v}^{\dR}$, and differentiation at the identity gives
		$$\mathrm{Lie}\left(F^0D_{\dR,v}(U_n)\right)\simeq\mathrm{Lie}\left(F^0U_{n,v}^{\dR}\right).$$
		On the other hand, \cite[Lemma~4.13]{Bet24}, for $B=B_{\dR}^+$ identifies the left side with $F^0D_{\dR,v}(\mathrm{Lie}\ U_n)$. Combining both isomorphisms, we conclude
		$$\mathrm{Lie}\left(F^0U_{n,v}^{\dR}\right)\simeq F^0D_{\dR,v}(\mathrm{Lie}\ U_n).$$
		Using \cite[Proposition~6.3.3]{BC09} inductively on $0\longrightarrow L_j\longrightarrow\mathrm{Lie}\ U_j\longrightarrow\mathrm{Lie}\ U_{j-1}\longrightarrow0$ and applying $F^0$, we obtain
		\begin{align*}
			\dim_{k_v}F^0U_{n,v}^{\dR}
			&=\dim_{k_v}F^0D_{\dR,v}(\mathrm{Lie}\ U_n)\\
			&=\sum_{j=1}^n\dim_{k_v}F^0D_{\dR,v}(L_j)=o(D_n).
		\end{align*}
		Since $\dim_{k_v}U_{n,v}^{\dR}=D_n$, \cite[Lemma~2.1.11]{Bet23} gives
		$$\dim_{k_v}(U_{n,v}^{\dR}/F^0)=D_n-o(D_n).$$
		Hence the Weil restriction satisfies
		\begin{align*}
			\dim_{\Q_p}\prod_{v\mid p}\Res_{k_v/\Q_p}(U_{n,v}^{\dR}/F^0)
			&=\sum_{v\mid p}[k_v:\Q_p]\dim_{k_v}(U_{n,v}^{\dR}/F^0)\\
			&=[k:\Q]D_n-o(D_n).
		\end{align*}
		The remaining claims readily follow.
	\end{proof}

\end{document}